\definecolor{red}{rgb}{1,0,0}
\definecolor{green}{rgb}{0,1,0}
\definecolor{blue}{rgb}{0,0,1}
\definecolor{refkey}{gray}{.625}
\definecolor{labelkey}{gray}{.625}
\let\oldmarginpar\marginpar
\renewcommand\marginpar[1]{\-\oldmarginpar[\raggedleft\footnotesize #1]%
{\raggedright\footnotesize #1}}
\def\dar[#1]{\ar@<2pt>[#1]\ar@<-2pt>[#1]}
\theoremstyle{plain}% default
\newtheorem{prop}{Proposition}[section]
\newtheorem{lem}[prop]{Lemma}
\newtheorem{cor}[prop]{Corollary}
\newtheorem{thm}[prop]{Theorem}
\newtheorem*{prop*}{Proposition}
\newtheorem*{lem*}{Lemma}
\newtheorem*{sublem*}{Sublemma}
\newtheorem*{cor*}{Corollaire}
\newtheorem*{thm*}{Th\'eor\`eme}
\newtheorem*{hypo*}{Hypothesis}
\newtheorem*{question*}{Question}
\newtheorem*{conjecture*}{Conjecture}
\newtheorem*{scholum*}{Scholum}
\newtheorem{defn}[prop]{Definition}
\newtheorem*{defn*}{D\'efinition}
\newtheoremstyle{slanted}% name
  {3pt}%      Space above, empty = `usual value'
  {3pt}%      Space below
  {\slshape}% Body font
  {}%         Indent amount (empty = no indent, \parindent = para indent)
  {\bfseries}% Thm head font
  {.}%        Punctuation after thm head
  {.5em}%     Space after thm head: " " = normal interword space;
\theoremstyle{slanted}
\newtheorem*{example*}{Example}
\newtheorem*{examples*}{Examples}
\newtheorem*{ex*}{Example}
\newtheorem*{exs*}{Examples}
\newtheorem*{remark*}{Remark}
\newtheorem*{remarks*}{Remarks}
\newtheorem{rmk}[prop]{Remark}
\newtheorem*{rmk*}{Remark}
\newtheorem*{rmks*}{Remarks}
\theoremstyle{definition}
\newtheorem*{con*}{Construction}
\newtheorem*{note*}{Note}
\theoremstyle{remark}
\newtheorem*{warning*}{Warning}
\newtheorem*{shortnote*}{Note}
\newtheorem*{claim*}{Claim}
\newtheorem*{axiom*}{Axiom}
\DeclareMathOperator{\ad}{ad}
\DeclareMathOperator{\Ad}{Ad}
\DeclareMathOperator{\id}{id}
\DeclareMathOperator{\Lie}{Lie}
\DeclareMathOperator{\pr}{pr}
\newcommand{\beq}[1]{\begin{equation}\label{#1}}
\newcommand{\eeq}{\end{equation}}
\newcommand{\RR}{\mathbb{R}}
\newcommand{\ZZ}{\mathbb{Z}}
\newcommand{\rond}{\circ}
\newcommand{\genrel}[2]{\left\{ #1 | #2 \right\}}
\newcommand{\XX}{\mathfrak{X}} % vector fields
\newcommand{\OO}{\Omega} % differential forms
\newcommand{\toto}{\rightrightarrows}
\newcommand{\isomorphism}{\cong}%{\simeq}
\newcommand{\inv}{^{-1}}
\newcommand{\simplicial}{_{\scriptscriptstyle\bullet}}
\newcommand{\graded}{^{\scriptscriptstyle\bullet}}
\newcommand{\mfg}{\mathfrak{g}} % Lie algebra of group $G$
\newcommand{\mfgs}{\mathfrak{g}^*} % \mfg star =  dual of lie algebra
\newcommand{\eqcls}[1]{\left[#1\right]}
\newcommand{\ddtz}[1]{\big.\tfrac{d}{dt}#1\big|_0}
\newcommand{\gm}{\Gamma}
\newcommand{\ii}{\mathbin{\vrule width1.5ex height.4pt\vrule height1.5ex}}
\newcommand{\ip}{\ii}
\newcommand{\lie}[2]{[#1,#2]} % Lie bracket
\newcommand{\xto}[1]{\xrightarrow{#1}}
\newcommand{\gmt}{\widetilde{\gm}}
\newcommand{\prt}{\widetilde{\pr}}
\newcommand{\delt}{\widetilde{\del}}
\newcommand{\source}{t}
\newcommand{\target}{s}
\newcommand{\sourcer}{{\source^\rtimes}}
\newcommand{\targetr}{{\target^\rtimes}}
\newcommand{\del}{\partial}
\newcommand{\dels}{\del^\times}
\newcommand{\delr}{\del^\rtimes}
\newcommand{\delrt}{\delt^\rtimes}
\newcommand{\Ds}{D^\times}
\newcommand{\Drt}{\widetilde{D}^\rtimes}
\newcommand{\dg}{d_G}
\newcommand{\act}{\star}
\newcommand{\pp}{p}
\DeclareMathOperator{\CURV}{curv}
\newcommand{\curv}{\CURV_G(\theta)}
\newcommand{\curvv}{\CURV_G(\theta')}
\newcommand{\OOG}{\OO_G}
\newcommand{\etag}{\eta_G}
\newcommand{\bg}{B_G}
\newcommand{\bb}{B}
\newcommand{\qq}{Q}
\newcommand{\ff}{f}
\newcommand{\lambdap}{\lambda'}
\newcommand{\pg}{p_G}
\DeclareMathOperator{\BS}{\psi}
\DeclareMathOperator{\DR}{DR}
\newcommand{\pih}{\widehat{\pi}}
\newcommand{\gam}{\gamma}
\newcommand{\xtilde}{\widetilde{x}}
\newcommand{\ytilde}{\widetilde{y}}
\newcommand{\ztilde}{{\widetilde{z}}}
\newcommand{\be }{\begin{eqnarray*}}
\newcommand{\ee }{\end{eqnarray*}}
\newcommand{\tx}{\tilde{x}}
\newcommand{\ty}{\tilde{y}}
\newcommand{\gmgerbe}{\gmt\xto{\pp}\gm\toto X}
\newcommand{\gmgerbee}{\gmt'\xto{\pp'}\gm'\toto X'}
\newcommand{\gmG}{\gmt\rtimes G\xto{\pg}\gm\rtimes G\toto X}
\newcommand{\gmGG}{\gmt'\rtimes G\xto{\pg'}\gm'\rtimes G\toto X'}
\let\ceV=\overleftarrow
\newcommand{\TXL}{MR2119241}
\newcommand{\TX}{TX}
\newcommand{\Jeffrey}{MR1321064}
\newcommand{\Crainic}{MR2016690}
\newcommand{\Hitchin}{MR1876068}
\newcommand{\BCWZ}{MR2068969}
\newcommand{\Giraud}{MR0344253}
\newcommand{\Segal}{MR0232393}
\newcommand{\BX}{BX}
\newcommand{\Chatterjee}{Chatterjee}
\newcommand{\Murray}{MR1405064}
\newcommand{\Meinrenken}{MR2026898}
\newcommand{\MS}{MR1794295}
\newcommand{\Brylinski}{MR1197353}
\newcommand{\Dupont}{MR0500997}
\def\polhk#1{\setbox0=\hbox{#1}{\ooalign{\hidewidth
    \lower1.5ex\hbox{`}\hidewidth\crcr\unhbox0}}} 
\newcommand{\Gawedzki}{MR1945806}
\newcommand{\Mathai}{MR1977885}
\renewcommand{\epsilon}{\varepsilon}
\begin{document}

%\title[]{Equivariant Dixmier-Douady Classes}
%\author[]{Mathieu Sti\'enon}
%\thanks{Research supported by the European Union through the FP6 Marie Curie R.T.N. ENIGMA
%(Contract number MRTN-CT-2004-5652).}
%\address{E.T.H.~Z\"urich, Departement Mathematik, 8092 Z\"urich, Switzerland}
%\email{\href{mailto:stienon@math.ethz.ch}{\texttt{stienon@math.ethz.ch}}}
%\date{\texttt{\jobname.tex}}
%%\begin{abstract}
%%\end{abstract}
%%\subjclass{}
%\maketitle

\title{\textsc{Equivariant Dixmier-Douady Classes}}
\author{
\textsc{Mathieu Sti\'enon}
\thanks{E.T.H.~Z\"urich, Departement Mathematik, R\"amistrasse 101, 8092 Z\"urich, Switzerland, 
\href{mailto:stienon@math.ethz.ch}{\texttt{stienon@math.ethz.ch}}} 
\thanks{Research supported by the European Union through the FP6 Marie Curie R.T.N. ENIGMA
(Contract number MRTN-CT-2004-5652).} 
}
\date{}
%\date{\texttt{\jobname.tex}}
%\subjclass{}
\maketitle

\begin{abstract}
An equivariant bundle gerbe \`a la Meinrenken over a $G$-manifold $M$ is known to be 
a special type of $S^1$-gerbe over the differentiable stack $[M/G]$. 
We prove that the natural morphism relating the Cartan and simplicial models of equivariant cohomology in degree 3 maps the Dixmier-Douady class of an equivariant bundle gerbe \`a la Meinrenken to the Behrend-Xu-Dixmier-Douady class of the corresponding $S^1$-gerbe.
\end{abstract}

\tableofcontents

\section{Introduction}\label{sec:1}

Recently, following Brylinski's pioneering work \cite{\Brylinski}, there has been increasing interest in studying the differential geometry of gerbes. In particular, Murray defined and investigated bundle gerbes \cite{\Murray}, which were further studied by Chatterjee \cite{\Chatterjee} and Hitchin \cite{\Hitchin}. 

By definition, a bundle gerbe over a smooth manifold $M$ is a central $S^1$-extension 
of the groupoid $X\times_M X\toto X$ coming from a surjective submersion $X\xto{\pi}M$. 
A class in $H^3(M,\ZZ)$ is associated to any bundle gerbe over $M$. It is called the Dixmier-Douady (DD) class. The bundle gerbes over $M$ are classified, up to Morita equivalence (or stable equivalence in \cite{MR1794295}), by their DD classes.  

Moreover, like the Chern classes of $S^1$-bundles, the DD classes can be expressed, up to torsion elements, in terms of the $3$-curvature. 
The equivariant counterparts of bundle gerbes are called equivariant bundle gerbes \cite{\Mathai}. 
They are $G$-equivariant central $S^1$-extensions of a groupoid $X\times_M X\toto X$ associated to a $G$-equivariant surjective submersion $X\xto{\pi}M$. 
Meinrenken \cite{\Meinrenken} and Gaw\polhk edzki-Reis \cite{\Gawedzki} studied extensively the equivariant bundle gerbes over simple Lie groups. 
In \cite{\Meinrenken}, Meinrenken introduced the equivariant 3-curvature of an equivariant bundle gerbe. It is a closed equivariant 3-form in Cartan's model of equivariant cohomology. Its cohomology class corresponds to the equivariant DD class of the gerbe. 

Recently, Behrend-Xu studied $S^1$-gerbes over differentiable stacks. 
From their perspective, a $G$-equivariant bundle gerbe is an $S^1$-gerbe over the quotient stack $[M/G]$, which is a Morita equivalence class of central $S^1$-extensions of groupoids $\tilde{H}\to H\toto N$, where $H\toto N$ is Morita equivalent to the groupoid $M\rtimes G\toto M$. 
From connection type data on such a central $S^1$-extension, Behrend-Xu construct a characteristic class in the degree 3 cohomology de~Rham cohomology group $H_{\DR}^3(H\simplicial)$ of the corresponding simplicial manifold. 

The purpose of this paper is to establish an explicit connection between equivariant bundle gerbes \`a la Meinrenken and $S^1$-gerbes over $[M/G]$ \`a la Behrend-Xu. 
For this purpose, we use an explicit map, obtained by Bursztyn-Crainic-Weinstein-Zhu \cite{\BCWZ}, between the Cartan and simplicial models of equivariant cohomology in degree 3. As a byproduct, we establish some further properties of the BCWZ-map and show that it is indeed an isomorphism at the cohomology level. We hope that this result will be of independent interest. 

%Equivariant bundle gerbes over a $G$-manifold $M$ were defined by Meinrenken in \cite{\Meinrenken}. In \cite{xxx}, \rood{some guy} showed that they are $S^1$-gerbes over the differentiable stack $[M/G]$ \rood{in the sense of} \cite{\BX}. 

%Given a $G$-manifold, both the Dixmier-Douady (DD) classes of Meinrenken's $G$-equivariant bundle gerbes over $M$ and, up to torsion elements, the DD classes of Behrend-Xu's $S^1$-gerbes over the quotient stack $[M/G]$ can be computed from connection type data. See \cite{\Meinrenken} and \cite{\BX}. 

%On the other hand, there exists a natural map relating the Cartan and simplicial models of equivariant cohomology in degree 3, which we call the BCWZ map (Proposition~\ref{psi}). We show that it induces an isomorphism at the cohomology level (Corollary~\ref{cor:4.4}).

%Our main theorem (Theorem~\ref{26}) asserts that 
Our main theorem states that
the BCWZ isomorphism maps the Meinrenken equivariant DD class of a $G$-equivariant bundle gerbe over a $G$-manifold $M$ to the Behrend-Xu-Dixmier-Douady (BXDD) class of the corresponding $S^1$-gerbe over the quotient stack $[M/G]$.

%Note that, in this paper, gerbes are described in terms of Lie groupoid extensions.

The paper is organized as follows.

Section~\ref{sec:2.1} recalls the definition of equivariant bundle gerbes and equivariant central $S^1$-extensions, while Section~\ref{sec:2.2} recalls how the Dimier-Douady (DD) class of a $G$-equivariant bundle gerbe may be computed from connection type data. 

Section~\ref{sec:3.1} gives a brief account of $S^1$-gerbes $\tilde{\XX}$ over a differentiable stack $\XX$ and their DD classes. The DD class of an $S^1$-gerbe induces a degree 3 de~Rham cohomology class called BXDD class which can be computed from connection type data.

In Section~\ref{sec:3.2}, we explain how an equivariant bundle gerbe over a $G$-manifold $M$ (in the sense of Murray and Meinrenken) produces an $S^1$-gerbe over the stack $[M/G]$ (in the sense of Behrend-Xu). And we compute the BXDD class of the central $S^1$-extension of groupoids presenting the $S^1$-gerbe over $[M/G]$ associated to a $G$-equivariant bundle gerbe over the manifold $M$.

In Section~\ref{sec:4.1}, we discuss the explicit formula due to BCWZ relating the Cartan and simplicial models of equivariant cohomology in degree 3 and detail some additional properties of this map. 

Sections~\ref{sec:4.2}~and~\ref{sec:4.3} contain our main result.

\paragraph{Acknowledgements}

The author is indebted to Jean-Louis Tu and Ping Xu for providing access to their unpublished manuscript \cite{\TX}.

\paragraph{Preliminaries}

We start by recalling a few definitions and conventions used throughout this paper. 

Let $G$ be a compact connected Lie group. 
A $G$-manifold is a smooth manifold $M$ endowed with a right action of $G$, which is denoted $M\times G:(m,g)\mapsto m\act g$.
If $M$ is a $G$-manifold and $\xi\in\mfg=\Lie(G)$, then $\widehat{\xi}$ denotes the 
infinitesimal vector field on $M$ defined by the relation 
$\widehat{\xi}|_x=\ddtz{x\act e^{t\xi}}, \qquad \forall x\in M$. 

The Cartan model for equivariant cohomology is the differential complex 
$\big(\OOG\graded(M),\dg\big)$ defined by 
\[ \OOG^k(M)=\bigoplus_{2i+j=k}\big(S^i\mfgs\otimes\OO^j(M)\big)^G \] 
and \[ \big(\dg\alpha\big)(\xi)=d\big(\alpha(\xi)\big)-\widehat{\xi}\ip\alpha(\xi) ,\] 
where $\xi\in\mfg$ and the element $\alpha$ of $\OOG\graded(M)$ is seen as an 
$\OO\graded(M)$-valued polynomial on $\mfg$.

The multiplication of a Lie groupoid $\gm_1\toto\gm_0$ is denoted by $\gm_2\to\gm_1:(x,y)\mapsto x\cdot y$, where $\gm_2:=\genrel{(x,y)\in\gm_1\times\gm_1}{\source(x)=\target(y)}$. 

By a $G$-groupoid, we mean a Lie groupoid $\gm_1\toto\gm_0$ such that both $\gm_1$ and $\gm_0$ are $G$-manifolds 
and all the structure maps ($\target,\source,m,\iota,\epsilon$) are $G$-equivariant. 
Recall that any Lie groupoid $\gm_1\rightrightarrows \gm_0$ gives rise to a simplicial manifold 
\begin{equation*} 
\xymatrix{ \cdots \ar@<-1.5ex>[r]\ar@<-.5ex>[r]\ar@<.5ex>[r]\ar@<1.5ex>[r] & \gm_2
\ar@<1ex>[r]\ar[r]\ar@<-1ex>[r] & \gm_1\ar@<-.5ex>[r]\ar@<.5ex>[r] & \gm_0 }
\end{equation*}
where \[ \gm_n=\{(x_1,\dots,x_n)|\source(x_i)=\target(x_{i+1}),\; i=1,\dots,n-1\} \]
is the set of composable $n$-tuples of elements of $\gm_1$ and the face maps 
$\epsilon^n_i:\gm_n\to \gm_{n-1}$ are given, for $n>1$, by 
\begin{align*} & \epsilon_0^n(x_1,x_2,\dots,x_n)=(x_2,\dots,x_n) \\
& \epsilon_n^n(x_1,x_2,\dots,x_n)=(x_1,\dots,x_{n-1}) \\
& \epsilon_i^n(x_1,x_2,\dots,x_n)=(x_1,\dots,x_i x_{i+1},\dots,x_n), \; \; 1\le i \le n-1 ,
\end{align*} and, for $n=1$, by $\epsilon_0^1(x)= \source(x)$ and $\epsilon_1^1(x)=\target(x)$. 
They satisfy the simplicial relations
\[ \epsilon_i^{n-1}\rond\epsilon_j^n=\epsilon_{j-1}^{n-1}\rond\epsilon_i^n 
\qquad \forall i<j .\]
See \cite{\Segal,\Dupont} for more details. 

Given a Lie groupoid $\gm_1\toto \gm_0$, 
consider the double complex $\OO\graded(\gm\simplicial)$:
\begin{equation*}
\xymatrix{ \cdots&\cdots&\cdots&\\
\OO^2(\gm_0)\ar[u]^d\ar[r]^\partial &\OO^2(\gm_1)\ar[u]^d\ar[r]^\partial 
&\OO^2(\gm_2)\ar[u]^d\ar[r]^\partial &\cdots \\
\OO^1(\gm_0)\ar[u]^d\ar[r]^\partial &\OO^1(\gm_1)\ar[u]^d\ar[r]^\partial 
&\OO^1(\gm_2)\ar[u]^d\ar[r]^\partial &\cdots \\
\OO^0(\gm_0)\ar[u]^d\ar[r]^\partial &\OO^0(\gm_1)\ar[u]^d\ar[r]^\partial 
&\OO^0(\gm_2)\ar[u]^d\ar[r]^\partial &\cdots }
\end{equation*}
Its coboundary maps are $d: \OO^k(\gm_p) \to \OO^{k+1}(\gm_p)$, the usual exterior 
differential of smooth forms 
and $\del: \OO^k(\gm_p) \to \OO^k(\gm_{p+1})$, the alternating sum 
of the pullbacks by the face maps: 
\beq{partial} \del=\sum_{i=0}^{n} (-1)^i (\epsilon_i^n)^* .\eeq
We denote the total differential by $D= (-1)^p d+\del$.
The cohomology groups 
\[ H_{\DR}^k(\gm\simplicial)
:=H^k\big(\OO\graded(\gm\simplicial),D\big) \]
of the total complex $\big(\OO\graded_{\DR}(\gm\simplicial),D\big)$
(where $\OO_{\DR}^k(\gm\simplicial)
=\bigoplus_{i=0}^k\OO^{k-i}(\gm_i)$)
are called the \emph{de~Rham cohomology groups of the groupoid $\gm_1\toto\gm_0$}.

Now let $\gm_1\toto\gm_0$ be a $G$-groupoid. We can consider the double complex $\OOG\graded(\gm\simplicial)$: 
\begin{equation*}
\xymatrix{ \cdots &\cdots &\cdots & \\
\OOG^2(\gm_0)\ar[u]^\dg\ar[r]^\del &\OOG^2(\gm_1)\ar[u]^\dg\ar[r]^\del &\OOG^2(\gm_2)\ar[u]^\dg\ar[r]^\del &\cdots \\
\OOG^1(\gm_0)\ar[u]^\dg\ar[r]^\del &\OOG^1(\gm_1)\ar[u]^\dg\ar[r]^\del &\OOG^1(\gm_2)\ar[u]^\dg\ar[r]^\del &\cdots \\
\OOG^0(\gm_0)\ar[u]^\dg\ar[r]^\del &\OOG^0(\gm_1)\ar[u]^\dg\ar[r]^\del &\OOG^0(\gm_2)\ar[u]^\dg\ar[r]^\del &\cdots }
\end{equation*}
Its coboundary operators are $\dg: \OOG^k(\gm_p) \to \OOG^{k+1}(\gm_p)$, the differential operator of the Cartan model 
and $\del: \OOG^k(\gm_p) \to \OOG^k(\gm_{p+1})$, 
the natural extension of \eqref{partial}.
%the alternating sum 
%of the pullbacks by the face maps: 
%\[ \del=\sum_{i=0}^{n} (-1)^i (\epsilon_i^n)^* .\]
We denote the total differential by $D_G=(-1)^p \dg+\del$.
The cohomology groups \[ H_G^k(\gm\simplicial):=
H^k\big(\OOG\graded(\gm\simplicial),D_G\big) \] of the total complex 
are called the \emph{equivariant cohomology groups of the $G$-groupoid $\gm_1\toto\gm_0$}. 
See \cite{\Jeffrey}.

\section{Equivariant bundle gerbes \`a la Meinrenken}
\label{sec:2}

In this section, we recall the notion of equivariant bundle gerbes and their equivariant Dixmier-Douady classes in terms of the Cartan model. We closely follow Meinrenken's approach \cite{\Meinrenken}.

\subsection{Equivariant central $S^1$-extensions}
\label{sec:2.1}

Assume that $X\xto{\pi}M$ is a surjective submersion. Consider the Lie groupoid 
\beq{gm} \gm\toto X , \qquad \text{with } \gm=X\times_M X ,\eeq 
the source and target maps $\target(x,y)=x$ and $\source(x,y)=y$, and the multiplication \[ (x,y)\cdot(y,z)=(x,z) .\] Then we have the Morita morphism \cite{\BX} 
\beq{gmM} \xymatrix{ \gm \dar[d] \ar[r]^{\pi'} & M \dar[d] \\ X \ar[r]^\pi& M } \eeq
where $\pi':\gm\to M$ is the map $(x,y)\mapsto\pi(x)=\pi(y)$. Indeed $\gm\toto X$ is the pullback of the trivial groupoid  $M\toto M$ to $X$ through $\pi$. 

Furthermore, if $G$ is a Lie group, $X$ and $M$ are $G$-manifolds and $X\xto{\pi}M$ is a $G$-equivariant surjective submersion, 
it is clear that the Lie group $G$ acts on $\gm\toto X$ by groupoid automorphisms, i.e. $\gm\toto X$ is a $G$-groupoid, and that $\pi'$ in \eqref{gmM} is a $G$-equivariant Morita morphism.

Recall that a \textbf{central $S^1$-extension} of a Lie groupoid $H\toto N$ consists of a morphism of Lie groupoids 
\beq{eq:p} \xymatrix{ \tilde{H} \dar[d] \ar[r]^{p} & H \dar[d] \\ N \ar[r]_{\id} & N } \eeq
and a left $S^1$-action on $\tilde{H}$, making $p:\tilde{H}\to H$ a (left) principal $S^1$-bundle \cite{\TXL,\BX}.
These two structures are compatible in the following sense: 
\[ (\lambda_1\tx)\cdot(\lambda_2\ty)=\lambda_1\lambda_2(\tx\cdot\ty) ,\]
for all $\lambda_1,\lambda_2\in S^1$ and $(\tx,\ty)\in\tilde{H}_2:=
\tilde{H}\times_{\source,N,\target}\tilde{H}$.

We will use the shorthand notation $\tilde{H}\xto{p}H\toto N$ to denote the above central $S^1$-extension.

A central $S^1$-extension $\tilde{H}\xto{p}H\toto N$ is said to be \textbf{$G$-equivariant} if both $\tilde{H}\toto N$ and $H\toto N$ are $G$-groupoids, the groupoid morphism $p:\tilde{H}\to H$ in \eqref{eq:p} is $G$-equivariant and the $G$-action preserves the principal $S^1$-bundle $\tilde{H}\xto{p} H$. That is, if the following relations: 
\begin{gather*}
(\tx\cdot\ty)\act g=(\tx\act g)\cdot(\ty\act g) \\ 
\pp(\tx\act g)=\pp(\tx)\act g  \\ 
(\lambda\tx)\act g=\lambda(\tx\act g) 
\end{gather*}
are satisfied for all $g\in G$, all composable pairs $(\tx,\ty)$ in $\tilde{H}_2$ and all $\lambda\in S^1$.

%\begin{lem}\label{lem:2.1}
%Assume $G$ is a compact connected Lie group. 
%And let $\tilde{H}\xto{p}H\toto N$ be a $G$-equivariant central $S^1$-extension.
%The $S^1$- and $G$-actions on $\tilde{H}$ commute.
%\end{lem}

%\begin{proof}
%If $\lambda\tx=\ty$, then we get 
%$\pp(\tx)=\pp(\ty)$ and, applying $\act g$ to both sides and using \eqref{pxg}, 
%$\pp(\tx\act g)=\pp(\ty\act g)$. 
%Hence there exists a unique $\lgx\in S^1$ such that 
%$\lgx(\tx\act g)=\ty\act g=(\lambda\tx)\act g$. 
%Clearly, the map $g\mapsto\lgx$ 
%is a homomorphism of topological groups from $G$ to $\Aut(S^1)$. 
%Since $\Aut(S^1)=\ZZ_2$ and $G$ is connected, 
%we must have $\lgx=\id$, for all $\tx\in\tilde{H}$, $g\in G$ and $\lambda\in S^1$.
%\end{proof}

Bundle gerbes were invented by Murray \cite{\Murray} (see also \cite{\Hitchin,\Chatterjee}). By definition, a \textbf{bundle gerbe} over a manifold $M$ is a central $S^1$-extension of the Lie groupoid $\gm\toto X$ (as in \eqref{gm}) obtained from a surjective submersion $X\to M$. There is a natural equivalence relation on central $S^1$-extensions \cite{\BX,\TXL}, the so-called Morita equivalence (or stable equivalence in \cite{\MS}), whose equivalence classes are classified by the cohomology group $H^3(M,\ZZ)$. The class in $H^3(M,\ZZ)$ attached to a central $S^1$-extension is called its \textbf{Dixmier-Douady class}.
Equivariant bundle gerbes are equivariant counterparts of bundle gerbes. According to Meinrenken \cite{\Meinrenken}, a \textbf{$G$-equivariant bundle gerbe} over a $G$-manifold $M$ is a $G$-equivariant central $S^1$-extension of the groupoid $\gm\toto X$ associated to a $G$-equivariant surjective submersion $X\to M$ as in \eqref{gm}.

\subsection{Equivariant Dixmier-Douady classes}
\label{sec:2.2}

Below we recall Meinrenken's definition of the equivariant 3-curvature and equivariant Dixmier-Douady class of a $G$-equivariant bundle gerbe.\footnote{What Meinrenken called an ``equivariant connection" \cite{\Meinrenken} consists of both an equivariant connection and an equivariant curving in our
terminology.}

\begin{defn}\label{curving}
Let $\gmt\xto{\pp}\gm\toto X$ be a $G$-equivariant bundle gerbe,
where $X\xto{\pi}M$ is a $G$-equivariant surjective submersion
and $\gm=X\times_M X$ is the resulting groupoid as in \eqref{gm}.
\begin{enumerate}
\item An \textbf{equivariant connection} is a $G$-invariant 1-form $\theta\in\OO^1(\gmt)^G$ such that $\theta$ is a connection 1-form for the principal $S^1$-bundle $\gmt\xto{\pp}\gm$ and satisfies
%\[ \theta(\ddtz{x\act e^{it}})=1, \qquad T\gmt=\ker\pp_*\oplus\ker\theta \quad \text{and} \quad  
%\delt\theta=0 .\] 
\[ \delt\theta=0 .\]
\item Given an equivariant connection $\theta$, an \textbf{equivariant curving} is a degree 2 element $\bg\in\OOG^2(X)$ such that 
\beq{2} \curv=\del\bg ,\eeq
where $\curv$ denotes the equivariant curvature of the $S^1$-principal bundle $\gmt\xto{\pp}\gm$, i.e. the element $\curv\in\OOG^2(\gm)$ characterized by the relation 
\beq{1} \dg\theta=\pp^*\curv .\eeq
\item Given an equivariant connection and an equivariant
curving $(\theta,\bg)$, the corresponding \textbf{equivariant 3-curvature} is the equivariant 3-form $\etag\in\OOG^3(M)$ such that 
\beq{3} \pi^*\etag=\dg\bg .\eeq
\end{enumerate}
Here the coboundary operators associated to the groupoids $\gm\toto X$ and $\gmt\toto X$ as in \eqref{partial} are denoted $\del$ and $\delt$ respectively. 
\end{defn}

The following result seems to be standard (see \cite{\Meinrenken,\TX}). 
However, we could not find a complete proof in the literature. 
For the sake of completeness, we will sketch a proof below.

\begin{prop}\label{independence} 
Let $\gmt\xto{\pp}\gm\toto X$ be a $G$-equivariant bundle gerbe over a $G$-manifold $M$.
\begin{enumerate}
\item Equivariant connections and curvings $(\theta,\bg)$ always exist.
\item The class $\eqcls{\etag}\in H_G^3(M)$ defined by
the equivariant 3-curvature is independent of the choice of $\theta$ and $\bg$.
\end{enumerate}
\end{prop}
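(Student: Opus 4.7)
The plan is to construct $(\theta,\bg)$ for (a) by averaging over the compact group $G$ combined with descent along $\pi:X\to M$, and then to deduce (b) by producing explicit primitives via the same descent mechanism. The key tool throughout is the resolution
\begin{equation*}
0 \to \OOG^k(M) \xto{\pi^*} \OOG^k(X) \xto{\del} \OOG^k(\gm) \xto{\del} \OOG^k(\gm_2) \to \cdots,
\end{equation*}
exact for every $k\ge 0$: the non-equivariant version has a contracting homotopy built from a partition of unity on $X$ subordinate to local sections of $\pi$, and taking $G$-invariants preserves exactness because $G$ is compact.

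For (a), pick any connection 1-form on $\pp:\gmt\to\gm$ and average it over $G$ to obtain a $G$-invariant connection $\theta_0$. The main technical obstacle is to then modify $\theta_0$ so as to achieve the multiplicativity condition $\delt\theta=0$. The form $\delt\theta_0\in\OO^1(\gmt_2)$ is horizontal and $S^1\!\times\!S^1$-invariant (this is where the central-extension identity $(\lambda_1\tx)\cdot(\lambda_2\ty)=\lambda_1\lambda_2(\tx\cdot\ty)$ enters), hence descends to $\mu\in\OO^1(\gm_2)^G$; the relation $\delt^2=0$ and injectivity of the pullback on $\gmt_3$ force $\del\mu=0$, so the resolution above yields $\nu\in\OO^1(\gm)^G$ with $\del\nu=\mu$, and $\theta:=\theta_0-\pp^*\nu$ is the sought $G$-invariant multiplicative connection. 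With $\theta$ fixed, \eqref{1} defines $\curv$, and the computation $\pp^*\del\curv=\delt\pp^*\curv=\delt\dg\theta=\dg\delt\theta=0$, combined with the injectivity of $\pp^*$, forces $\del\curv=0$; descent then supplies $\bg\in\OOG^2(X)$ with $\del\bg=\curv$. Likewise $\pp^*\dg\curv=\dg^2\theta=0$ forces $\dg\curv=0$, so $\del\dg\bg=\dg\del\bg=\dg\curv=0$, and a second application of descent supplies a unique $\etag\in\OOG^3(M)$ satisfying $\pi^*\etag=\dg\bg$.

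For (b), let $(\theta',\bg')$ be a second choice with 3-curvature $\etag'$. Since $\theta-\theta'$ is horizontal, $S^1$-invariant and $G$-invariant on $\gmt$, it descends to $\pp^*\alpha$ for a unique $\alpha\in\OOG^1(\gm)$; the equalities $\delt\theta=\delt\theta'=0$ together with the injectivity of the induced pullback on $\gmt_2$ give $\del\alpha=0$, so the resolution yields $\beta\in\OOG^1(X)$ with $\del\beta=\alpha$. Applying $\dg$ to $\pp^*\alpha=\theta-\theta'$ and using \eqref{1} gives $\curv-\curvv=\dg\alpha=\del\dg\beta$, and then \eqref{2} yields $\del(\bg-\bg'-\dg\beta)=0$; hence $\bg-\bg'-\dg\beta=\pi^*\gamma$ for some $\gamma\in\OOG^2(M)$, and a last application of $\dg$ combined with the injectivity of $\pi^*$ produces $\etag-\etag'=\dg\gamma$, proving $[\etag]=[\etag']$ in $H_G^3(M)$. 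Thus the main obstacle is the construction of the multiplicative equivariant connection in (a); once that is in place, both parts of the proposition reduce to diagram-chases through the descent resolution, with $G$-equivariance automatic because all constructions are performed in the Cartan complex $\OOG\graded$.
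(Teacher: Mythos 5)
Your argument is correct and follows essentially the same route as the paper: average a connection over the compact group $G$, correct it by a coboundary using the exactness of the $G$-invariant and Cartan-model descent complexes along $\pi:X\to M$ (the paper's Lemma~\ref{Murray}), and then run the same diagram chase through that resolution for part (b). The only differences are cosmetic --- you spell out the descent of $\delt\theta_0$ to $\gm_2$ and the existence and uniqueness of $\etag$ slightly more explicitly than the paper does.
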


We need the following lemma.

\begin{lem}\label{Murray}
\begin{enumerate}
\item Given a surjective submersion $\pi:X\to M$, the
sequence
\beq{fes} 0\to\Omega^k(M)\xto{\pi^*}
\OO^k(X)\xto{\del}
\OO^k(\gm)\xto{\del}
\OO^k(\gm_2)\xto{\del}\cdots \eeq
is exact.
\item Given two $G$-manifolds $X$ and $M$ and a $G$-equivariant surjective submersion $\pi:X\to M$, the sequence
\beq{ses} 0\to\Omega^k(M)^G\xto{\pi^*}
\OO^k(X)^G\xto{\del}
\OO^k(\gm)^G\xto{\del}
\OO^k(\gm_2)^G\xto{\del}\cdots \eeq
is exact.
\item 
Given a $G$-equivariant surjective submersion $\pi:X\to M$ between two $G$-manifolds $X$ and $M$, the sequence 
\beq{tes} 0\to\Omega^k_G(M)\xto{\pi^*}
\OOG^k(X)\xto{\del}
\OOG^k(\gm)\xto{\del}
\OOG^k(\gm_2)\xto{\del}\cdots \eeq
is exact.
\end{enumerate}
\end{lem}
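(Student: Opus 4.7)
The plan is to construct an explicit chain contraction for the sequence in (a) using a partition of unity and local sections of $\pi$, and then promote the resulting exactness to the $G$-invariant and Cartan-model settings by averaging over the compact group $G$.

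First, I would choose a locally finite open cover $\{U_\alpha\}$ of $M$ over which $\pi$ admits smooth local sections $\sigma_\alpha: U_\alpha \to X$, together with a subordinate partition of unity $\{\rho_\alpha\}$. Identifying $\gm_n$ with the iterated fibre product $X \times_M \cdots \times_M X$ ($n+1$ factors) via composability, so that $\epsilon_i^n$ deletes the $i$-th entry, I would define $h: \OO^k(\gm_n) \to \OO^k(\gm_{n-1})$ for $n \geq 1$ by
\[
(h\omega)(y_0, \dots, y_{n-1}) := \sum_\alpha \rho_\alpha(\pi(y_0))\, \omega\bigl(\sigma_\alpha(\pi(y_0)), y_0, \dots, y_{n-1}\bigr).
\]
A direct computation using $\sum_\alpha \rho_\alpha = 1$ and $\pi(y_0) = \pi(y_1)$ yields $\del h + h \del = \id$ on $\OO^k(\gm_n)$ for $n \geq 1$, proving exactness at those terms. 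Exactness at $\OO^k(X)$ is handled separately: if $\del\omega = 0$ then $p_1^*\omega = p_2^*\omega$ on $\gm$, so the local forms $\sigma_\alpha^*\omega$ agree on overlaps (by pulling back the equality via $(\sigma_\alpha, \sigma_\beta): U_\alpha \cap U_\beta \to \gm$), and $\beta := \sum_\alpha \rho_\alpha\, \sigma_\alpha^*\omega \in \OO^k(M)$ satisfies $\pi^*\beta = \omega$; injectivity of $\pi^*$ is immediate from surjectivity of $\pi$.

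For part (b), I would average: the operator $h^G := \int_G g^* \circ h \circ (g^{-1})^*\, dg$ with Haar measure remains a chain contraction (since $\del$ is $G$-equivariant) and is now $G$-equivariant, hence restricts to the $G$-invariant subcomplex; for $n=0$, uniqueness of $\beta$ in $\pi^*\beta = \omega$ forces $\beta \in \OO^k(M)^G$ whenever $\omega$ is $G$-invariant. For part (c), an element of $\OOG^k(\gm_n)$ is a $G$-equivariant polynomial $\xi \mapsto \alpha(\xi) \in \OO^\bullet(\gm_n)$ with $\del$ acting on the form factor; setting $\tilde\beta(\xi) := h^G(\alpha(\xi))$ produces a polynomial, $G$-equivariant primitive, yielding exactness at $\OOG^k(\gm_n)$ for $n \geq 1$, while exactness at $\OOG^k(X)$ reduces to (b) applied to each polynomial coefficient. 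The main bookkeeping step I expect to grind through is the verification of $\del h + h \del = \id$: the $i=0$ term of $\del h\omega$ cancels the $j=1$ term of $h\del\omega$ by virtue of $\pi(y_0) = \pi(y_1)$, the paired $(i,i+1)$-terms for $i \geq 1$ cancel because $\sigma_\alpha(\pi(y_0))$ occupies the leading slot in both, and the $j=0$ term of $h\del\omega$ collapses to $\omega$ via $\sum_\alpha \rho_\alpha = 1$.
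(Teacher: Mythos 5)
Your proof is correct, and it takes a somewhat different --- and in places more streamlined --- route than the paper's. For part (a) the paper simply cites Murray; your explicit contraction $h$ built from local sections and a partition of unity is essentially Murray's original argument, and your verification of $\del h + h\del = \id$ is sound (the key points being that $\pi(y_0)=\pi(y_1)$ on $\gm_n$ and that $\sum_\alpha\rho_\alpha=1$). The genuine divergence is in (b) and (c): the paper averages the \emph{primitive} produced by (a) over $G$, and in (c) first decomposes $\OOG^k$ into the summands $\OO^j(\cdot)^G$ and $\big(\mfgs\otimes\OO^0(\cdot)\big)^G$, treating the second with an $\Ad$-twisted average and writing out only $k=2$; you instead average the homotopy operator itself to obtain a single $G$-equivariant contraction $h^G$. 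That one move yields (b) immediately and makes (c) uniform in $k$: applying the equivariant, linear operator $h^G$ to an $\OO\graded$-valued polynomial preserves both polynomiality and equivariance, so no decomposition and no explicit $\Ad$-twist are needed. Two small points to tidy: the Haar measure in $h^G$ must be normalized (or divided by the volume, as the paper does) for $\del h^G + h^G\del=\id$ to survive the averaging; and at the leftmost spot in (c) you cannot literally apply (b) ``to each polynomial coefficient,'' since the coefficients of an equivariant element with respect to a basis of $S^i\mfgs$ are not individually $G$-invariant --- instead apply (a) coefficient-wise to get the unique $\beta$ with $\pi^*\beta=\alpha$ and deduce its invariance from the injectivity of $\pi^*$ together with the equivariance of $\pi$. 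Neither point affects the substance.
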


\begin{proof}
(a) This was proved in \cite{\Murray}. 

(b) Since the face maps of the simplicial manifold 
\[ \xymatrix{
\cdots \ar@<1.5ex>[r]\ar@<0.5ex>[r]\ar@<-0.5ex>[r]\ar@<-1.5ex>[r] & 
\Gamma_2 \ar@<1ex>[r]\ar[r]\ar@<-1ex>[r] & 
\Gamma \ar@<-.5ex>[r]\ar@<.5ex>[r] &
X } \]
(and $X\xto{\pi}M$) are all $G$-equivariant, $R_g^*$ commutes with $\partial$ (and $\pi^*$). 
Hence \eqref{ses} is a subcomplex of \eqref{fes}. 
Now $\pi^*$ in \eqref{ses} is a restriction of $\pi^*$ in \eqref{fes}, which is injective. 
Therefore, $\pi^*$ in \eqref{ses} is injective.
Finally, take $\omega\in\OO^k(\Gamma_p)^G\subset\OO^k(\Gamma_p)$ 
such that $\del\omega=0$. By (a), there exists $\nu\in\OO^k(\Gamma_{p-1})$ such that $\del\nu=\omega$. Since the group $G$ is compact, 
we can choose a left-invariant Haar measure $dg$ on $G$ 
and define a $G$-invariant k-form $\nu'$ which satisfies $\del\nu'=\omega$ by 
\[ \nu'=\frac{1}{V}\int_G R_g^*\nu\; dg\in\OO^k(\Gamma_{p-1})^G ,\] 
where $V=\int_G 1\;dg$ is the volume of $G$. 

(c) For simplicity, we only consider the case $k=2$. 
Since $\pi$ is $G$-equivariant, it induces a pair of maps 
$\OO^2(M)^G\xto{\pi^*}\OO^2(X)^G$ and 
$\big(\mfgs\otimes\OO^2(M)\big)^G\xto{\pi^*}\big(\mfgs\otimes\OO^2(X)\big)^G$ and thus also 
$\OOG^2(M)\xto{\pi^*}\OOG^2(X)$. 
Because the face maps are $G$-equivariant, the alternate sum of their pullbacks induces the pair of maps 
\begin{gather*} 
\OO^2(\Gamma_{p-1})^G\xto{\partial}\OO^2(\Gamma_p)^G \\ 
\big(\mfgs\otimes\OO^0(\Gamma_{p-1})\big)^G\xto{\partial}
\big(\mfgs\otimes\OO^0(\Gamma_p)\big)^G 
,\end{gather*}
whose direct sum is the desired map 
\[ \OOG^2(\Gamma_{p-1})\xto{\partial}\OOG^2(\Gamma_p) .\]
Since \eqref{tes} is the direct sum of \eqref{ses} with $k=2$ (which is exact) and 
\[ \big(\mfgs\otimes\OO^0(M)\big)^G \xto{\pi^*} 
\big(\mfgs\otimes\OO^0(X)\big)^G \xto{\del} 
\big(\mfgs\otimes\OO^0(\Gamma)\big)^G \xto{\del} 
\big(\mfgs\otimes\OO^0(\Gamma_2)\big)^G \xto{\del} 
\cdots ,\]
it suffices to prove that the latter sequence is exact.
Let $f$ be an arbitrary element of $\big(\mfgs\otimes\OO^0(\Gamma_p)\big)^G$, i.e. 
\[ f(\Ad_g\xi)\;(x_1,x_2,\dots,x_p)=f(\xi)\;(x_1\act g,x_2\act g,\dots,x_p\act g) ,\] 
for all $\xi\in\mfg$, $g\in G$ and $(x_1,\dots,x_p)\in\gm_p$. 
And assume that $\del f=0$. 
Choose a basis $(e_1,\dots,e_n)$ of $\mfg$. Then $f(e_i)\in\OO^0(\Gamma_p)$ and 
$\del\big(f(e_i)\big)=0$. By (a), there exists $h(e_i)\in\OO^0(\Gamma_{p-1})$ such that 
$\del\big(h(e_i)\big)=f(e_i)$. 
We can define $h'(e_i)\in\big(\mfgs\otimes\OO^0(\Gamma_{p-1})\big)^G$ by 
\[ h'(e_i)=\frac{1}{V}\int_G R_g^*h(\Ad_{g\inv}e_i)\; dg .\] 
Clearly, $f(e_i)=\del\big(h'(e_i)\big)$ 
and thus \[ f(\sum_i e_i\xi^i)=\del\big(\sum_i h'(e_i)\xi^i\big) ,\] 
where $\sum_i h'(e_i)\xi^i\in\big(\mfgs\otimes\OO^0(\Gamma_{p-1})\big)^G$.
\end{proof}

\begin{proof}[Proof of Proposition~\ref{independence}]
(a) Take any connection 1-form $\theta'\in\Omega^1(\gmt)$ for the $S^1$-principal bundle $\pp:\gmt\to\gm$. Since $G$ is compact, one can always take $\theta'$ to be $G$-invariant, i.e. $\theta'\in\Omega^1(\gmt)^G$. It is simple to see that $\delt\theta'$ must be the pull back of a $G$-invariant 1-form on $\gm_2$ under $\pp:\gmt_2\to\gm_2$. That is $\delt\theta'=\pp^*\alpha$, where $\alpha\in\Omega^1(\gm_2)^G$. It follows from $\delt^2=0$ that $\del\alpha=0$. By Lemma~\ref{Murray}(b), we have $\alpha=\del A$ for some $A\in\Omega^1(\gm)^G$. Therefore $\theta=\theta'-\pp^*A$ is an equivariant connection.

Given an equivariant connection $\theta$, \eqref{1} implies that $\del\curv=0$ since $\delt\theta=0$. By Lemma~\ref{Murray}(c), there exists $\bg\in \Omega^2_G(X)$ such that $\curv=\del\bg$. That is, $\bg$ is an equivariant curving.
 
Assume that $(\theta',\bg',\etag)$ is another such triple. We have $\theta-\theta'=\pp^*\beta$ for some $\beta\in\Omega^1(\gm)^G$.
And $\del\beta=0$. By Lemma~\ref{Murray}(b), we have $\beta=\del\gamma$ for some $\gamma\in\Omega^1(X)^G$. Now
\begin{align*}
0=& \dg\theta-\dg\theta'-\pp^*\dg\beta \\
=& \pp^*\big(\curv-\curvv-\dg\del\gamma\big) \\ 
=& \pp^*\big(\del(\bg-\bg'-\dg\gamma)\big)
.\end{align*}
Therefore $\del(\bg-\bg'-\dg\gamma)=0$. Note that $\bg-\bg'-\dg\gamma\in\Omega_G^2(X)$. Hence, by Lemma~\ref{Murray}(c), there exists $\lambda\in\Omega_G^2(M)$ such that $\bg-\bg'-\dg\gamma=\pi^*\lambda$. Applying $\dg$ to both sides, we get $\dg(\bg-\bg'-\dg\gamma)=\pi^*\dg\lambda$, which implies that $\etag-\etag'=\dg\lambda$. The conclusion follows.
\end{proof}

The class $\eqcls{\etag}$ is called \textbf{equivariant Dixmier-Douady class} by Meinrenken \cite{\Meinrenken}.

\subsection{Morita equivalences}
\label{sec:2.3}

Recall that two central $S^1$-extensions $\tilde{H}\to H\toto N$ and $\tilde{H}'\to H'\toto N'$ are said to be Morita equivalent \cite{\BX,\TXL} if there exists a $\tilde{H}$-$\tilde{H}'$-bitorsor $Z$ endowed with a (left) $S^1$-action such that \[ (\lambda r)\cdot z\cdot r'=r\cdot(\lambda z)\cdot r'=r\cdot z\cdot(\lambda r') \] whenever $(\lambda,r,z,r')\in S^1\times\tilde{H}\times Z\times\tilde{H}'$ and the products make sense.

\begin{defn}
Two $G$-equivariant bundle gerbes $\gmgerbe$ and $\gmgerbee$ are Morita equivalent if they are Morita equivalent as central $S^1$-extensions, the equivalence bitorsor $Z$ is a $G$-space and 
\beq{bi} (r\cdot z\cdot r')\act g=(r \act g) \cdot (z \act g) \cdot (r'\act  g), \qquad\forall g\in G \eeq
whenever $(r,z,r')\in\gmt\times Z\times\gmt'$ and the products make sense. 
\end{defn}

A bitorsor satisfying \eqref{bi} is called a \textbf{$G$-equivariant bitorsor}.

\begin{prop}
If $\gmgerbe$ and $\gmgerbee$ are Morita equivalent $G$-equivariant bundle gerbes with equivalence bitorsor $Z$, the $S^1$-action on $Z$ is free and $Z/S^1$ is a $G$-equivariant $H$-$H'$-bitorsor. Hence $\gm\toto X$ and $\gm'\toto X'$ are Morita equivalent $G$-groupoids. In other words, the $G$-manifolds $M$ and $M'$ underlying the bundle gerbes $p$ and $p'$ are one and the same manifold.
\end{prop}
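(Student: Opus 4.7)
The plan is to pass to the quotient by the central $S^1$. First I would check that the $S^1$-action on $Z$ is free. Given $z\in Z$, pick $r\in\gmt$ and $r'\in\gmt'$ with $r\cdot z\cdot r'$ defined (possible by the bitorsor property). If $\lambda z = z$, then the relation
\[ (\lambda r)\cdot z\cdot r' \;=\; r\cdot(\lambda z)\cdot r' \;=\; r\cdot z\cdot r' \]
combined with the freeness of the right $\gmt'$-action and the left $\gmt$-action on $Z$ gives $\lambda r = r$, and freeness of $S^1$ on the principal bundle $\gmt\to\gm$ forces $\lambda = 1$. Next I would verify that the $S^1$- and $G$-actions on $Z$ commute: starting from $r\cdot(\lambda z)\cdot r' = (\lambda r)\cdot z\cdot r'$, applying $\act g$ using the $G$-equivariance~\eqref{bi} of the bitorsor, and using the fact that $S^1$ and $G$ commute on $\gmt$ (because $\gmt\xto{\pp}\gm\toto X$ is a $G$-equivariant central $S^1$-extension), one obtains
\[ (r\act g)\cdot \bigl((\lambda z)\act g\bigr)\cdot(r'\act g) \;=\; (r\act g)\cdot \bigl(\lambda(z\act g)\bigr)\cdot(r'\act g), \]
and freeness of both bitorsor actions yields $(\lambda z)\act g = \lambda(z\act g)$.

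Since the $S^1$-action is free and proper (being inherited from $\gmt$ through the free and transitive bitorsor actions), $Z/S^1$ is a smooth manifold inheriting a $G$-action. The left action of $\gmt$ on $Z$ descends to an action of $\gm=\gmt/S^1$: if $r_1 = \lambda r$, then $r_1\cdot z = (\lambda r)\cdot z = \lambda(r\cdot z)$, so the class $[r\cdot z]\in Z/S^1$ depends only on $[r]\in\gm$. Similarly the right $\gmt'$-action descends to a right $\gm'$-action. I would then verify in turn that these commuting actions make $Z/S^1$ into a $\gm$-$\gm'$-bitorsor: freeness and transitivity of the $\gm$- and $\gm'$-actions on $Z/S^1$ follow directly from those of $\gmt$ and $\gmt'$ on $Z$, and the $G$-equivariance of $Z/S^1$ follows from~\eqref{bi} once one has established that $S^1$ and $G$ commute on $Z$.

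Having produced a $G$-equivariant bitorsor $Z/S^1$ between $\gm\toto X$ and $\gm'\toto X'$, I would conclude that these two $G$-groupoids are Morita equivalent. Morita equivalence of Lie groupoids induces a diffeomorphism of orbit spaces, and since $\gm = X\times_M X$ has orbit space $X/\gm = M$ (and similarly for $\gm'$), this gives a diffeomorphism $M\to M'$. The $G$-equivariance of $Z/S^1$ ensures that this diffeomorphism intertwines the $G$-actions on $M$ and $M'$, proving that they coincide as $G$-manifolds.

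The step I expect to be the most delicate is the verification that the quotient $Z/S^1$ genuinely inherits the structure of a smooth $G$-equivariant bitorsor with all the required compatibilities, rather than just a set-theoretic one; the rest reduces to a bookkeeping argument and to the standard fact that Morita equivalent Lie groupoids have diffeomorphic orbit spaces, applied here in the $G$-equivariant setting.
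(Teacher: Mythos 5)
Your argument is correct. Note that the paper itself states this proposition without any proof, so there is nothing to compare against; your write-up supplies the standard verification one would expect. The key steps are all present and sound: freeness of the $S^1$-action on $Z$ via the compatibility relation $(\lambda r)\cdot z\cdot r'=r\cdot(\lambda z)\cdot r'$ together with freeness of the bitorsor actions and of $S^1$ on $\gmt\to\gm$; the commutation of the $S^1$- and $G$-actions on $Z$ (which, as you correctly observe, is not part of the definition \eqref{bi} and must be derived); the descent of the $\gmt$- and $\gmt'$-actions to $\gm=\gmt/S^1$ and $\gm'=\gmt'/S^1$ acting on $Z/S^1$; and the identification of the orbit spaces of $\gm=X\times_M X\toto X$ and $\gm'\toto X'$ with $M$ and $M'$. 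The only point you flag as delicate --- smoothness of $Z/S^1$ --- is in fact automatic: a smooth action of the compact group $S^1$ is proper, so freeness alone guarantees that the quotient is a manifold and that the descended structure maps are smooth. One small bookkeeping remark: when cancelling $r'$ in $(\lambda r)\cdot z\cdot r'=r\cdot z\cdot r'$ you are implicitly using that the action map $Z\times_{X}\gmt'\to Z\times_{X'}Z$ is a diffeomorphism (principality of the right action), not merely set-theoretic freeness; it is worth saying so explicitly, but it is part of the bitorsor hypothesis.
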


\section{Behrend-Xu-Dixmier-Douady classes}
\label{sec:3}

\subsection{General theory}
\label{sec:3.1}

In \cite{\BX} (see also \cite{\TXL}), Behrend-Xu developed a general theory of $S^1$-gerbes over differentiable stacks in terms of central $S^1$-extensions of Lie groupoids. Roughly speaking, an \textbf{$S^1$-gerbe $\widetilde{\XX}$ over a differentiable stack $\XX$} can be thought of as a Morita equivalence class of central $S^1$-extensions of Lie groupoids $H\toto N$, where $H\toto N$ is a presentation of the differentiable stack $\XX$. (One needs to choose a suitable representative amongst all presentations of the differentiable stack $\XX$, for not every presentation of the stack $\XX$ can be extended to a presentation of the stack $\widetilde{\XX}$. See \cite{\BX}.) According to Giraud \cite{\Giraud}, the $S^1$-gerbes over a differentiable stack $\XX$ are classified by the cohomology group $H^2(\XX,S^1)$. Hence, there exists a natural map
\[ \{\text{central $S^1$-extensions of $H\toto N$}\} \xto{\tau} H^2(\XX,S^1) .\]
Composing $\tau$ with the boundary map $H^2(\XX,S^1)\to H^3(\XX,\ZZ)$ associated to the short exact sequence 
\[ 0\to\ZZ\to \RR\xto{\exp}S^1\to 0 ,\] we get a map 
\[ \{\text{central $S^1$-extensions of $H\toto N$}\} \longrightarrow H^3(\XX,\ZZ)\isomorphism H^3(H\simplicial,\ZZ) .\] 
The image of a central $S^1$-extension under the above map is called its \textbf{Dixmier-Douady class} in \cite{\BX}.

Behrend-Xu also proved that, similarly to the Chern classes of bundles, the Dixmier-Douady classes of central $S^1$-extensions can be computed, up to torsion elements, from connection type data. 
Recall that a pseudo-connection on a central $S^1$-extension $\tilde{H}\xto{p}H\toto N$ is a sum 
\[ \theta+\lambda\in\Omega^1(\tilde{H})\oplus\Omega^2(N)\subset
\Omega_{\DR}^2(\tilde{H}\simplicial) \]
such that $\theta$ is a connection 1-form on the principal $S^1$-bundle $\tilde{H}\xto{p}H$ \cite{\BX}.
Its pseudo-curvature
\[ \eta+\omega+\Omega\in\Omega^1(H_2)\oplus\Omega^2(H)\oplus\Omega^3(N)\subset\Omega_{\DR}^3(H\simplicial) \] 
is defined by the relation 
\[ \tilde{D}(\theta+\lambda)=p^*(\eta+\omega+\Omega) .\]

\begin{thm}[\cite{\BX}]
The pseudo-curvature $\eta+\omega+\Omega$ is a 3-cocycle in $\Omega_{\DR}^3(H\simplicial)$. Its cohomology class $[\eta+\omega+\Omega]$ is an integer class in $H^3_{\DR}(H\simplicial)$, which is independent of the choice of pseudo-connection. Under the canonical homomorphism $H^3(H\simplicial,\ZZ)\to H^3_{DR}(H\simplicial)$, the Dixmier-Douady class of $\tilde{H}\xto{p}H\toto N$ maps to $[\eta+\omega+\Omega]$.
\end{thm}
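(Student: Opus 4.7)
The approach is to establish the three assertions separately, following the reasoning of \cite{\BX}.

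For the cocycle property, I apply the total differential $D$ to both sides of the defining relation $\tilde{D}(\theta+\lambda)=p^*(\eta+\omega+\Omega)$. Since $p^*$ intertwines $D$ and $\tilde{D}$, and $\tilde{D}^2=0$, one obtains $p^*D(\eta+\omega+\Omega)=0$. Because $p:\tilde{H}\to H$ is a principal $S^1$-bundle, it is a surjective submersion at every simplicial level, so $p^*$ is injective on the relevant spaces of forms (cf.\ Lemma~\ref{Murray}(a) applied to $\tilde{H}_n\to H_n$); hence $D(\eta+\omega+\Omega)=0$.

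For independence, let $(\theta',\lambda')$ be a second pseudo-connection. The difference $\theta-\theta'$ is $S^1$-invariant and horizontal, being the difference of two connection 1-forms on the same principal $S^1$-bundle $p:\tilde{H}\to H$, so $\theta-\theta'=p^*A$ for a unique $A\in\Omega^1(H)$. Setting $\mu:=A+(\lambda-\lambda')\in\Omega^1(H)\oplus\Omega^2(N)\subset\Omega_{\DR}^2(H\simplicial)$, we have $(\theta+\lambda)-(\theta'+\lambda')=p^*\mu$. Applying $\tilde{D}$ to this identity and using the injectivity of $p^*$ once more, we deduce
$$(\eta+\omega+\Omega)-(\eta'+\omega'+\Omega')=D\mu,$$
so the class in $H^3_{\DR}(H\simplicial)$ is independent of the chosen pseudo-connection.

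For integrality and the identification with the Dixmier-Douady class, the plan is to work with a good cover $\{U_i\}$ of $N$ over which the central $S^1$-extension $\tilde{H}\to H\toto N$ admits explicit local trivializations. Such trivializations yield Cech-type data representing the class $\tau\in H^2(\mathcal{X},S^1)$, and the Bockstein associated to $0\to\ZZ\to\RR\xto{\exp}S^1\to 0$ produces an integer class in $H^3(H\simplicial,\ZZ)$. Choosing local sections of $\tilde{H}\to H$ compatible with these trivializations and using them to pull back $\theta$, one then tracks the resulting cocycle through the Cech-de~Rham double complex for the simplicial manifold $H\simplicial$ and verifies that the resulting de~Rham representative coincides with $[\eta+\omega+\Omega]$.

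The main obstacle lies precisely in this last step: the zigzag in the double Cech-de~Rham complex of $H\simplicial$ demands meticulous bookkeeping of bidegrees and sign conventions, and it is exactly this computation that simultaneously exhibits integrality and delivers the identification with the Dixmier-Douady class. The full argument is carried out in \cite{\BX}.
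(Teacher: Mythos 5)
The paper offers no proof of this statement: it is quoted as a result of Behrend--Xu and used as a black box, so there is no internal argument to compare yours against. Judged on its own terms, your treatment of the first two assertions is correct and complete. For the cocycle property, the key points are exactly the ones you name: $p$ is a groupoid morphism over $\id_N$, so $p^*$ intertwines the simplicial and de~Rham differentials and hence the total differentials, and $p\colon\tilde{H}_n\to H_n$ is a surjective submersion at every simplicial level, so $p^*$ is injective on forms; then $0=\tilde{D}^2(\theta+\lambda)=p^*D(\eta+\omega+\Omega)$ forces $D(\eta+\omega+\Omega)=0$. For independence, the observation that the difference of two connection $1$-forms on the $S^1$-bundle $\tilde{H}\to H$ is basic, so that the two pseudo-connections differ by $p^*\mu$ with $\mu=A+(\lambda-\lambda')\in\Omega^1(H)\oplus\Omega^2(N)$, immediately gives $(\eta+\omega+\Omega)-(\eta'+\omega'+\Omega')=D\mu$ by the same injectivity. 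Both arguments are sound.

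The third assertion --- integrality of $[\eta+\omega+\Omega]$ and its identification with the image of the Dixmier--Douady class under $H^3(H\simplicial,\ZZ)\to H^3_{\DR}(H\simplicial)$ --- is the substantive content of the theorem, and here your proposal is only a roadmap, not a proof: you describe the \v{C}ech--de~Rham zigzag one would have to run but do not run it, and you defer to \cite{\BX} for exactly the step that produces the integer class and matches it to $[\eta+\omega+\Omega]$. As a self-contained argument this is a genuine gap (note in particular that local trivializations over a good cover of $N$ alone do not suffice; one needs a \v{C}ech resolution adapted to the whole simplicial manifold $H\simplicial$, i.e.\ compatible covers of all the $H_n$). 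That said, the present paper makes no attempt at this either, so your proposal is consistent with --- indeed more detailed than --- the paper's own treatment, which consists of the citation alone.
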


The de~Rham class $[\eta+\omega+\Omega]\in H^3_{\DR}(H\simplicial)$ will be called \textbf{Behrend-Xu-Dixmier-Douady class}.

\subsection{An $S^1$-gerbe over $[M/G]$}
\label{sec:3.2}

There is a natural correspondence between equivariant bundle gerbes over a $G$-manifold $M$ in the sense of Murray and Meinrenken and $S^1$-gerbes over the stack $[M/G]$ in the sense of Behrend-Xu \cite{\BX,\TX,\TXL}. 
%In this section, we show that the equivariant bundle gerbes of Section~\ref{sec:2} naturally give rise to such $S^1$-gerbes.

The quotient stack $[M/G]$ can be presented by the transformation groupoid $M\rtimes G\toto M$, where $\source(x,g)=xg$, $\target(x,g)=x$ and 
\[ (x,g)\cdot(y,h)=(x,gh), \qquad\text{when } y=x\act g .\]
Adopting the Behrend-Xu perspective, we note that an $S^1$-gerbe over the stack $[M/G]$ can always be presented by a central $S^1$-extension $\tilde{H}\to H\toto N$ of a Lie groupoid $H\toto N$ Morita equivalent to $M\rtimes G\toto M$.

Now consider, as in Section~\ref{sec:2}, a $G$-equivariant bundle gerbe $\gmt\xto{\pp}\gm\toto X$, where $X\xto{\pi}M$ is a $G$-equivariant surjective submersion and $\gm=X\times_M X$. Since $\gm\toto X$ (resp. $\gmt\toto X$) is a $G$-groupoid, we can form the transformation groupoid $\gm\rtimes G\toto X$ (resp. $\gmt\rtimes G\toto X$). 
%whose source map is $\targetr:(\gam,g)\mapsto\target(\gam)$, 
%whose target map is $\sourcer:(\gam,g)\mapsto\source(\gam)\act g$ 
%and whose multiplication is given by
%\[ (\gam_1,g_1)\cdot(\gam_2,g_2)=(\gam_1\cdot(\gam_2\act g_1\inv),g_1 g_2) ,\]
%for any $\gam_1,\gam_2\in\gm$ and $g_1,g_2\in G$ 
%such that $\source(\gam_1)\act g_1=\target(\gam_2)$.
%Similarly, one forms the transformation groupoid $\gmt\rtimes G\toto X$. 

If $\gm\simplicial=(\gm_1\toto\gm_0)$ is a $G$-groupoid, its transformation groupoid $\gm^{\rtimes}\simplicial=(\gm_1\rtimes G\toto\gm_0)$ is the groupoid whose source map is $\targetr:(\gam,g)\mapsto\target(\gam)$, whose target map is $\sourcer:(\gam,g)\mapsto\source(\gam)\act g$ and whose multiplication is given by
\[ (\gam_1,g_1)\cdot(\gam_2,g_2)=(\gam_1\cdot(\gam_2\act g_1\inv),g_1 g_2) ,\]
for any $\gam_1,\gam_2\in\gm_1$ and $g_1,g_2\in G$ such that $\source(\gam_1)\act g_1=\target(\gam_2)$.

\begin{lem}\label{lem:3.2}
\begin{enumerate}
\item The groupoids $\gm\rtimes G \toto X$ and $M\rtimes G\toto M$ are Morita equivalent.
\item Set $\pg(\widetilde{\gamma},g)=(\pp(\widetilde{\gamma}),g)$. Then $\gmt\rtimes G\xto{\pg}\gm\rtimes G\toto X$ is a central $S^1$-extension of Lie groupoids.
\end{enumerate}
\end{lem}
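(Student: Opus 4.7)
The plan is to verify both statements by direct inspection of the structure maps, exploiting the $G$-equivariance of $\pi$ and $\pp$ throughout.

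For part (a), I would first write down an explicit candidate Morita morphism: on objects, use $\pi:X\to M$; on arrows, use the map $\pi'\rtimes\id:\gm\rtimes G\to M\rtimes G$ defined by $((x,y),g)\mapsto(\pi(x),g)$. Checking that this is a strict morphism of Lie groupoids amounts to verifying compatibility with $\targetr,\sourcer$ and with multiplication. The only non-obvious point is that $\pi(y\act g)=\pi(x)\act g$ whenever $(x,y)\in\gm$, which is immediate from the $G$-equivariance of $\pi$ together with $\pi(x)=\pi(y)$. Compatibility with multiplication follows because, when $y_1\act g_1=x_2$, the product $((x_1,y_1),g_1)\cdot((x_2,y_2),g_2)$ equals $((x_1,y_2\act g_1\inv),g_1g_2)$, which $\pi'\rtimes\id$ sends to $(\pi(x_1),g_1g_2)=(\pi(x_1),g_1)\cdot(\pi(x_2),g_2)$.

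Next I would show that this morphism is a Morita equivalence by checking the two standard criteria. The essential surjectivity condition asks that the map $(M\rtimes G)\times_{\source,M,\pi}X\to M$, $((m,g),x)\mapsto m$, be a surjective submersion; this follows at once from the fact that $\pi$ itself is a surjective submersion (take $g=e$). For the full-faithfulness condition, one must verify that the square
\[
\xymatrix{
\gm\rtimes G\ar[r]^{\pi'\rtimes\id}\ar[d]_{(\targetr,\sourcer)}&M\rtimes G\ar[d]^{(\target,\source)}\\
X\times X\ar[r]^{\pi\times\pi}&M\times M
}
\]
is Cartesian. This is a routine check: an element of the fibered product is a quadruple $(x_1,x_2,m,g)$ with $\pi(x_1)=m$ and $\pi(x_2)=m\act g$, and by $G$-equivariance of $\pi$ this is in bijection with $\{((x,y),g)\in\gm\rtimes G\}$ via $((x,y),g)\leftrightarrow(x,y\act g,\pi(x),g)$.

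For part (b), the verification that $\pg$ is a morphism of Lie groupoids proceeds exactly as in part (a), using the $G$-equivariance of $\pp$ and the fact that $\pp$ is a groupoid morphism which is the identity on the unit space $X$. To exhibit the central $S^1$-extension structure, I would define the $S^1$-action on $\gmt\rtimes G$ by $\lambda\cdot(\widetilde{\gamma},g):=(\lambda\widetilde{\gamma},g)$; the projection $\pg$ is then manifestly a principal $S^1$-bundle over $\gm\rtimes G$ because $\pp:\gmt\to\gm$ already is. Finally, the centrality axiom
\[
\bigl(\lambda_1(\widetilde{\gamma}_1,g_1)\bigr)\cdot\bigl(\lambda_2(\widetilde{\gamma}_2,g_2)\bigr)
=\lambda_1\lambda_2\bigl((\widetilde{\gamma}_1,g_1)\cdot(\widetilde{\gamma}_2,g_2)\bigr)
\]
reduces, after expanding the transformation-groupoid multiplication, to the identity
\[
\widetilde{\gamma}_1\cdot(\lambda_2\widetilde{\gamma}_2\act g_1\inv)
=\lambda_2\bigl(\widetilde{\gamma}_1\cdot(\widetilde{\gamma}_2\act g_1\inv)\bigr),
\]
which follows by combining the $G$-equivariance of the $S^1$-action on $\gmt$ (one of the axioms of a $G$-equivariant central $S^1$-extension, which lets the $S^1$-scalar pass through $\act g_1\inv$) with the centrality of the original extension $\gmt\xto{\pp}\gm\toto X$.

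No step presents a serious obstacle; the only mild subtlety is keeping track of the conventions for $\sourcer,\targetr$ and the twisting $\widetilde{\gamma}_2\act g_1\inv$ in the semidirect-product multiplication, which is where $G$-equivariance of both $\pi$ and the $S^1$-action is crucially used.
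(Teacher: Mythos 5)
Your proposal is correct and follows essentially the same route as the paper: the same morphism $\pih((x,y),g)=(\pi(x),g)=(\pi(y),g)$ for part (a) and the same observation for part (b) that the $G$-equivariance axioms of the extension make $\pg$ an $S^1$-equivariant groupoid morphism with central action. The only difference is one of presentation: the paper simply invokes the general fact that a $G$-equivariant Morita morphism induces a Morita morphism of the associated transformation groupoids, whereas you verify the essential-surjectivity and Cartesian-square conditions by hand, which is a harmless (and arguably more informative) expansion.
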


\begin{proof}
(a) Since $\gm\toto X$ is Morita equivalent to the trivial groupoid $M\toto M$ and the Morita morphism mapping $\gm\toto X$ to $M\toto M$ is $G$-equivariant, it follows that the morphism 
\beq{7} \xymatrix{ \gm\rtimes G \dar[d] \ar[r]^{\pih} & M\rtimes G \dar[d] \\
X \ar[r]_{\pi} & M } \eeq
defined by 
\[ \pih(x,y,g)=(\pi(y),g), \qquad \forall (x,y)\in\gm=X\times_M X \] 
is a Morita morphism of Lie groupoids.

(b) This follows immediately from the definition of $G$-equivariant central $S^1$-extension.
\end{proof}

Hence, the extension $\gmt\rtimes G\xto{\pg}\gm\rtimes G\toto X$ induces an $S^1$-gerbe over the quotient stack $[M/G]$ in the sense of Behrend-Xu.

%\subsection{... and its Behrend-Xu-Dixmier-Douady class}
%\label{sec:3.3}

We now compute the Behrend-Xu-Dixmier-Douady class of the central $S^1$-extension $\gmG$. 
Note that we have the following commutative diagram 
\[ \xymatrix{ \gmt\rtimes G \ar[r]^{\pg} \ar[d]_{\prt} & \gm\rtimes G \ar[d]^{\pr} \\ \gmt \ar[r]_{\pp} & \gm } \]
where $\prt$ and $\pr$ are mere projection maps rather than groupoid morphisms.

By $\delr$ and $\delrt$, we denote the coboundary operators associated to the Lie groupoids $\gm\rtimes G\toto X$ and $\gmt\rtimes G\toto X$, respectively, as in \eqref{partial}.

\begin{lem}\label{lem:3.3}
Assume that $\theta\in\Omega^{1}(\gmt)$ is a $G$-equivariant connection for the $G$-equivariant bundle gerbe $\gmgerbe$.
Then $\Theta:=\prt^*\theta\in\OO^1(\gmt\rtimes G)$ is a connection 1-form for the principal $S^1$-bundle $p_G:\gmt\rtimes G\to\gm\rtimes G$, hence a pseudo-connection for the central $S^1$-extension $\gmG$. One has 
\[ \delrt\Theta=\pg^*\zeta ,\] 
where $\zeta\in\OO^1\big((\gm\rtimes G)_2\big)$ is defined by 
\beq{15} (\pp_G^*\zeta)\big((v_{\xtilde},L_{g*}\xi),(w_{\ytilde},L_{h*}\eta)\big)=\widehat{\xi}_{\ytilde}\ip\theta ,\eeq
for any $\xi,\eta\in\mfg$, $g,h\in G$, $v_{\xtilde}\in T_{\xtilde}\gmt$ and $w_{\ytilde}\in T_{\ytilde}\gmt$ 
such that $\sourcer_*(v_{\xtilde},L_{g*}\xi)=\targetr_*(w_{\ytilde},L_{h*}\eta)$.
\end{lem}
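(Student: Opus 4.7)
The plan is to split the claim into two parts: (i) $\Theta=\prt^*\theta$ is a connection 1-form for $\pg$, and (ii) explicit computation of $\delrt\Theta$, which will be shown to equal $\pg^*\zeta$ with $\zeta$ given by \eqref{15}.

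For part (i), I would use that by the definition of a $G$-equivariant central $S^1$-extension, the $S^1$-action on $\gmt\rtimes G$ is trivial on the group factor: $\lambda\cdot(\widetilde{\gamma},g)=(\lambda\widetilde{\gamma},g)$. Therefore the fundamental vector field of $\xi\in\Lie(S^1)$ on $\gmt\rtimes G$ is $\prt$-related to the fundamental vector field of $\xi$ on $\gmt$, and one has $\Theta(X_\xi)=\theta(X_\xi)=\xi$ since $\theta$ is itself a connection on $\gmt\xto{\pp}\gm$. The $S^1$-invariance $L_\lambda^*\Theta=\Theta$ follows immediately from the corresponding invariance of $\theta$, since $\prt\circ L_\lambda=L_\lambda\circ\prt$. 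Hence $\Theta$ is a connection 1-form, and by regarding it as $\Theta+0\in\Omega^1(\gmt\rtimes G)\oplus\Omega^2(X)$ it is a pseudo-connection in the Behrend-Xu sense.

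For part (ii), I would expand $\delrt\Theta=\epsilon_0^*\Theta-\epsilon_1^*\Theta+\epsilon_2^*\Theta$ on the nerve $(\gmt\rtimes G)_2$. Write a composable pair as $\bigl((\widetilde{\gamma}_1,g_1),(\widetilde{\gamma}_2,g_2)\bigr)$ with $\source(\widetilde{\gamma}_1)\act g_1=\target(\widetilde{\gamma}_2)$, and denote tangent vectors by $V_i=(v_i,L_{g_i*}\eta_i)$. The two ``drop'' maps $\epsilon_0$ and $\epsilon_2$ pull back to $\theta_{\widetilde{\gamma}_2}(v_2)$ and $\theta_{\widetilde{\gamma}_1}(v_1)$ respectively. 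The content lies in $\epsilon_1$, since the groupoid multiplication in $\gmt\rtimes G$ gives
\[ \prt\bigl((\widetilde{\gamma}_1,g_1)\cdot(\widetilde{\gamma}_2,g_2)\bigr)=\widetilde{\gamma}_1\cdot(\widetilde{\gamma}_2\act g_1^{-1}) .\]
The main (and really only) obstacle is to keep track, when differentiating this map, of the contribution of a variation $L_{g_1*}\eta_1$ in $g_1$: a short calculation using the definition of the infinitesimal vector field $\widehat{\eta_1}$ on $\gmt$ shows that the derivative of $t\mapsto\widetilde{\gamma}_2\cdot g_1(t)^{-1}$ equals $dR_{g_1^{-1}}\bigl(v_2-\widehat{\eta_1}|_{\widetilde{\gamma}_2}\bigr)$.

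Combining this with the cocycle condition $\delt\theta=0$, which at the $\gmt$-composable pair $(\widetilde{\gamma}_1,\widetilde{\gamma}_2\act g_1^{-1})$ reads
\[ \theta\bigl(dm(v_1,w)\bigr)=\theta_{\widetilde{\gamma}_1}(v_1)+\theta_{\widetilde{\gamma}_2\act g_1^{-1}}(w) ,\]
and then using the $G$-invariance $R_{g_1^{-1}}^*\theta=\theta$ to move the pullback through, one obtains
\[ \epsilon_1^*\Theta(V_1,V_2)=\theta_{\widetilde{\gamma}_1}(v_1)+\theta_{\widetilde{\gamma}_2}(v_2)-\widehat{\eta_1}_{\widetilde{\gamma}_2}\ip\theta .\]
Substituting into $\delrt\Theta=\epsilon_0^*\Theta-\epsilon_1^*\Theta+\epsilon_2^*\Theta$, the terms $\theta_{\widetilde{\gamma}_1}(v_1)$ and $\theta_{\widetilde{\gamma}_2}(v_2)$ cancel and what remains is $\widehat{\eta_1}_{\widetilde{\gamma}_2}\ip\theta$. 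Since $\pg$ is a surjective submersion onto $(\gm\rtimes G)_2$ and the right-hand side is constant along $S^1$-fibres, this precisely identifies $\delrt\Theta$ with $\pg^*\zeta$ for the unique 1-form $\zeta$ on $(\gm\rtimes G)_2$ characterized by \eqref{15}.
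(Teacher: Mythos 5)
Your proposal is correct and takes essentially the same route as the paper: both reduce everything to the term coming from the face map $\epsilon_1$, compute the differential of the multiplication $(\widetilde{\gamma}_1,g_1)\cdot(\widetilde{\gamma}_2,g_2)=(\widetilde{\gamma}_1\cdot(\widetilde{\gamma}_2\act g_1^{-1}),g_1g_2)$, and then apply $\delt\theta=0$ together with the $G$-invariance of $\theta$ to isolate the residual term $\widehat{\xi}_{\ytilde}\ip\theta$. The only difference is presentational (the paper differentiates along explicit paths, you compute the pushforward of the multiplication map directly), and your part (i) spells out a point the paper leaves implicit.
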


\begin{proof}
Let $t\mapsto g_t$ and $t\mapsto h_t$ be paths in $G$ originating from $g$ and $h$ respectively and determining two vectors $\xi$ and $\eta$ of $\mfg$ by the relations $L_{g*}\xi=\ddtz{g_t}$ and $L_{h*}\eta=\ddtz{h_t}$.
Similarly, let $t\mapsto\xtilde_t$ and $t\mapsto\ytilde_t$ be smooth paths in $\gmt$ originating from $\xtilde$ and $\ytilde$, respectively, with $\ddtz{\xtilde_t}=v_{\xtilde}$ and $\ddtz{\ytilde_t}=w_{\ytilde}$ and such that, at any time $t$, the target of $\xtilde_t\act g_t$ coincides with the source of $\ytilde_t$. Then 
\begin{align*} 
& (\delrt\Theta)\big(\ddtz{(\xtilde_t,g_t)},\ddtz{(\ytilde_t,h_t)}\big) && \\ 
=\;& \Theta\big(\ddtz{(\ytilde_t,h_t)}\big)-\Theta\big(\ddtz{(\xtilde_t\cdot(\ytilde_t\act g_t\inv),g_t h_t)}\big) && \\ 
& +\Theta\big(\ddtz{(\xtilde_t,g_t)}\big) && \\ 
=\;& \theta\big(\ddtz{\ytilde_t}\big)-\theta\big(\ddtz{\xtilde_t\cdot(\ytilde_t\act g_t\inv)}\big)+\theta\big(\ddtz{\xtilde_t}\big) 
&& \text{since } \Theta=\pr^*\theta \\
=\;& \theta\big(\ddtz{\ytilde_t}\big)-\theta\big(\ddtz{\xtilde_t}\big)-\theta\big(\ddtz{\ytilde_t\act g_t\inv}\big)+\theta\big(\ddtz{\xtilde_t}\big) 
&& \text{since } \delt\theta=0 \\
=\;& \theta\big(\ddtz{\ytilde_t}\big)-\theta\big(\ddtz{(\ytilde_t\act g\inv)}\big)-\theta\big(\ddtz{(\ytilde\act g_t\inv )}\big) && \\ 
=\;& -\theta\big(\ddtz{y\act (g e^{t\xi})\inv}\big) && \text{since $\theta$ is $G$-invariant} \\ 
=\;& \theta\big(\widehat{\xi}_{\ytilde}\act g\inv\big) && \\ 
=\;& \theta\big(\widehat{\xi}_{\ytilde}\big) && \text{since $\theta$ is $G$-invariant} .
\end{align*}
The result follows.
\end{proof}

\begin{prop}
Let $\theta\in\OO^1(\gmt)$ be a $G$-equivariant connection on a $G$-equivariant bundle gerbe 
$\gmt\xto{\pp}\gm\toto X$ over a $G$-manifold $M$. Then
$\Theta:=\prt^*\theta\in\OO^1(\gmt\rtimes G)$ is a pseudo-connection
for the central $S^1$-extension $\gmG$. Its pseudo-curvature
is $\zeta-\pr^*\omega\in Z^3((\gm\times G)\simplicial)$, where $\zeta$
is given by \eqref{15} and $\omega$ is characterized by $d\theta=p^*\omega$.
Hence the Behrend-Xu-Dixmier-Douady class is 
$\eqcls{\zeta-\pr^*\omega}\in H_{\DR}^3\big((\gm\rtimes G)\simplicial\big)$.
\end{prop}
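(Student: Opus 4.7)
The plan is to verify that $\Theta:=\prt^*\theta$ is a pseudo-connection for the central $S^1$-extension $\gmG$ and then to compute its pseudo-curvature by applying the total differential of the simplicial de~Rham complex of $(\gmt\rtimes G)\simplicial$ directly.

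First I would check the pseudo-connection property. Because the $S^1$-action on $\gmt\rtimes G$ is trivial on the $G$-factor and acts via the given action on the $\gmt$-factor, the projection $\prt:\gmt\rtimes G\to\gmt$ is $S^1$-equivariant and maps the fundamental vector field of $S^1$ on $\gmt\rtimes G$ to the corresponding one on $\gmt$. Consequently $\Theta=\prt^*\theta$ inherits from $\theta$ both the $S^1$-invariance and the normalization on the fundamental vector field, so it is a connection $1$-form on the principal $S^1$-bundle $\pg:\gmt\rtimes G\to\gm\rtimes G$. Taking zero $\Omega^2(X)$-component, this makes $\Theta$ a pseudo-connection in the sense of Behrend-Xu.

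Next I would compute $\widetilde{D}\Theta$. Since $\Theta\in\Omega^1((\gmt\rtimes G)_1)$ sits at simplicial degree $p=1$, the sign convention $D=(-1)^p d+\del$ gives
\[ \widetilde{D}\Theta \;=\; -d\Theta \;+\; \delrt\Theta .\]
The horizontal component is supplied by Lemma~\ref{lem:3.3}, which asserts $\delrt\Theta=\pg^*\zeta$ with $\zeta$ as in \eqref{15}. For the vertical component, the commutative square $\pp\circ\prt=\pr\circ\pg$ together with the defining relation $d\theta=\pp^*\omega$ yields
\[ d\Theta \;=\; d(\prt^*\theta) \;=\; \prt^*d\theta \;=\; \prt^*\pp^*\omega \;=\; \pg^*\pr^*\omega .\]
Combining these two pieces gives $\widetilde{D}\Theta=\pg^*(\zeta-\pr^*\omega)$, so the pseudo-curvature of $\Theta$ is the $3$-cocycle $\zeta-\pr^*\omega\in\Omega^1((\gm\rtimes G)_2)\oplus\Omega^2(\gm\rtimes G)$, with vanishing $\Omega^3(X)$ component (because no $\Omega^2(X)$-term was added to $\Theta$). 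The final claim about the Behrend-Xu-Dixmier-Douady class of $\gmG$ is then an immediate consequence of the Behrend-Xu theorem recalled in Section~\ref{sec:3.1}.

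I do not anticipate any genuine obstacle; the entire argument reduces to Lemma~\ref{lem:3.3} and the commutativity of the four projection maps $\pp,\pg,\pr,\prt$. The single point deserving attention is the $(-1)^p$ prefactor in the total differential, which is exactly what produces the minus sign in front of $\pr^*\omega$ in the pseudo-curvature.
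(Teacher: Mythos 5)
Your proposal is correct and follows essentially the same route as the paper: the pseudo-curvature is obtained by combining $d\Theta=\prt^*\pp^*\omega=\pg^*\pr^*\omega$ (from the commuting square of projections) with $\delrt\Theta=\pg^*\zeta$ from Lemma~\ref{lem:3.3}, and the sign $(-1)^p$ at $p=1$ in the total differential produces $\Drt\Theta=\delrt\Theta-d\Theta=\pg^*(\zeta-\pr^*\omega)$. The only difference is that you spell out the verification that $\Theta$ is a connection $1$-form, which the paper absorbs into the statement of Lemma~\ref{lem:3.3}.
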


\begin{proof}
Since 
\[ d\Theta=d\;\prt^*\theta=\prt^*d\theta=\prt^*\pp^*\omega=\pg^*\pr^*\omega ,\] 
the associated pseudo-curvature is 
\[ \Drt\Theta=\delrt\Theta-d\Theta=\pg^*(\zeta-\pr^*\omega) .\]
\end{proof}

\begin{rmk}
From Lemma~\ref{lem:3.3}, we see that $\delrt\Theta$ vanishes if, and only if, $\delt\theta=0$ and $\theta$ is basic with respect to the $G$-action. In this case, $\Theta$ is a connection for the central $S^1$-extension $\gmG$ in the sense of Behrend-Xu \cite{\BX}. See \cite{\TX} for details.
\end{rmk}

\section{Linking Murray-Meinrenken to Behrend-Xu}
\label{sec:4}

\subsection{The BCWZ morphism}
\label{sec:4.1}

In order to compare Meinrenken's equivariant Dixmier-Douady class with the Behrend-Xu-Dixmier-Douady class, we need an explicit formula relating the Cartan and simplicial models of equivariant cohomology in degree 3. The following result can be found in \cite{\BCWZ} (though the group acts from the left in \cite{\BCWZ}).

\begin{prop}[Proposition~6.10 in \cite{\BCWZ}]\label{psi}
Let $N$ be a manifold on which a Lie group $G$ acts from the right. 
Consider the map 
\[ \BS:\OO_G^3(N)\to\OO_{\DR}^3\big((N\rtimes G)\simplicial\big) \] 
mapping $\alpha\in\OO^3(N)$ to itself ($\BS(\alpha)=\alpha$) and 
$\eta\in\big(\mfgs\otimes\OO^1(N)\big)^G$ to the 2-form 
$\BS(\eta)\in\OO^2(N\rtimes G)$ defined by 
\begin{equation}\label{17}
\BS(\eta)\;\big((v_1,L_{g*}\xi_1),(v_2,L_{g*}\xi_2)\big) =\eta(\xi_2)\;(v_1\act g)-\eta(\xi_1)\;(v_2\act g)+\eta(\xi_2)\;(\widehat{\xi_1}|_{x\act g})
,\end{equation}
for all $v_1,v_2\in T_x N$, $\xi_1,\xi_2\in\mfg$ and $g\in G$.
The map $\BS$ injects $Z^3(N)^G$ into $Z_{\DR}^3\big((N\rtimes G)\simplicial\big)$.
Moreover, \[ \BS\big(Z^3(N)^G\big)=Z_{\DR}^3\big((N\rtimes G)\simplicial\big)\cap 
\big(\OO^3(M)\oplus\OO^2(N\rtimes G)\big) .\]
\end{prop}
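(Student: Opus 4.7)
The plan is to establish four things in order: well-definedness of $\BS$, injectivity on $Z^3(N)^G$, preservation of cocycles, and surjectivity onto the specified intersection. First I would check that \eqref{17} really does define a smooth 2-form on $N \rtimes G$. Smoothness is manifest; antisymmetry under the swap $(v_1, L_{g*}\xi_1) \leftrightarrow (v_2, L_{g*}\xi_2)$ reduces, after a short calculation, to the identity $\eta(\xi_1)(\widehat{\xi_2}|_x) + \eta(\xi_2)(\widehat{\xi_1}|_x) = 0$, which one recognises as the bidegree $(2, 0)$ component of $\dg \eta$. Thus $\BS$ should be regarded as defined on Cartan cocycles, and by construction its image sits inside $\OO^3(N) \oplus \OO^2(N \rtimes G) \subset \OO_{\DR}^3\big((N \rtimes G)\simplicial\big)$. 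Injectivity is then immediate: $\BS(\alpha + \eta) = 0$ forces $\alpha = 0$ by independence of the summands, and setting $\xi_1 = 0$, $v_2 = 0$ in \eqref{17} yields $\eta(\xi_2)(v_1 \act g) \equiv 0$, whence $\eta = 0$.

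For the heart of the proof, unpack the Cartan cocycle condition $\dg(\alpha + \eta) = 0$ into its three bidegree components: $d \alpha = 0$ at $(0, 4)$; the transgression $d \eta(\xi) = \widehat{\xi} \ip \alpha$ at $(1, 2)$; and the infinitesimal invariance $\eta(\xi_1)(\widehat{\xi_2}) + \eta(\xi_2)(\widehat{\xi_1}) = 0$ at $(2, 0)$. Applying the total differential $D = (-1)^p d + \del$ to $\alpha + \BS(\eta)$ produces three components which must vanish independently: $d\alpha \in \OO^4(N)$, $\del \alpha - d\BS(\eta) \in \OO^3(N \rtimes G)$, and $\del \BS(\eta) \in \OO^2\big((N \rtimes G)_2\big)$. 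The first is immediate. The second follows by computing $d\BS(\eta)$ directly from \eqref{17} in the decomposition $T_{(x, g)}(N \rtimes G) = T_x N \oplus L_{g*}\mfg$ and substituting the transgression identity. The third reduces, after writing $\del \BS(\eta)$ as the alternating sum of pullbacks under the three face maps of $(N \rtimes G)_2 \to N \rtimes G$, to a regrouping that exploits both the $G$-equivariance of $\eta$ and the infinitesimal invariance identity.

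For surjectivity, given a closed $\alpha + \beta$ in the target intersection, define a candidate $\eta$ by $\eta(\xi)(v) := \beta\big((v, 0), (0, \xi)\big)\big|_{(x, e)}$; extract $G$-equivariance of $\eta$ from $\del \beta = 0$ applied along identity bisections of $G$, verify $\beta = \BS(\eta)$ by comparing the three types of pairs of tangent vectors (horizontal-horizontal, mixed, and vertical-vertical), and translate the closure of $\alpha + \beta$ back into the three Cartan bidegree conditions above, yielding $\alpha + \eta \in Z^3(N)^G$. The main obstacle will be the identity $\del \BS(\eta) = 0$ in the preservation step: the three face-map pullbacks expand into a sizeable collection of terms involving the $G$-action on both base points and tangent vectors, and only a careful tracking of how the infinitesimal generators $\widehat{\xi_i}$ recombine under the multiplication $(x, g_1) \cdot (x \act g_1, g_2) = (x, g_1 g_2)$ of the transformation groupoid reveals the cancellation modulo the transgression and infinitesimal invariance identities. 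A cleaner conceptual route via Dupont-style integration of Cartan forms over simplices would give a more transparent explanation of why $\BS$ is a chain map in degree 3, but the direct verification above suffices.
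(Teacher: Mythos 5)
The paper itself contains no proof of this statement: it is imported verbatim from \cite{\BCWZ} (Proposition~6.10 there), and the only thing the text adds is the remark that the right-hand side of \eqref{17} changes sign under the swap of indices. So your direct verification is the natural route, and there is nothing in the paper to compare it against. Your plan is sound, and its sharpest point is the observation that antisymmetry of \eqref{17} holds only modulo the identity $\eta(\xi_1)(\widehat{\xi_2})+\eta(\xi_2)(\widehat{\xi_1})=0$, i.e.\ the vanishing of the $\big(S^2\mfgs\otimes\OO^0(N)\big)^G$-component of $\dg\eta$. That is a genuine refinement of the statement and of the remark following it: for $N=G=S^1$ acting on itself by right translation, $\eta(\xi)=c\,\xi\, d\phi$ lies in $\big(\mfgs\otimes\OO^1(N)\big)^G$ but violates the identity, so \eqref{17} is not skew there. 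This causes no harm downstream, since every $\eta$ to which $\BS$ is later applied --- the $(1,1)$-part of a Cartan cocycle, $\qq(\xi)=\widehat{\xi}\ip B$ with $B$ invariant, and $d\ff$ --- does satisfy it, but you are right that $\BS$ is only honestly a map of \emph{forms} on that subspace.

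Your decomposition of $D\big(\alpha+\BS(\eta)\big)$ into the three components $d\alpha$, $\del\alpha-d\BS(\eta)$ and $\del\BS(\eta)$, matched against the three Cartan bidegree conditions $d\alpha=0$, $d\big(\eta(\xi)\big)=\widehat{\xi}\ip\alpha$ and the isotropy identity, is the correct bookkeeping (including the sign $D=(-1)^p d+\del$), and the surjectivity recipe $\eta(\xi)(v):=\beta\big((v,0),(0,\xi)\big)\big|_{(x,e)}$ together with $\del\beta=0$ is the standard inverse construction. The one caveat is that the two identities carrying essentially all of the content --- $\del\alpha=d\BS(\eta)$, which requires both the transgression relation and the $G$-invariance of $\alpha$, and $\del\BS(\eta)=0$ over $(N\rtimes G)_2$ --- are named but not executed; a complete write-up would still have to supply these multi-term expansions (or defer, as the paper does, to \cite{\BCWZ}). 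I see no step that would fail, only steps left to be carried out.
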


\begin{rmk}
One can check that the R.H.S. of \eqref{17} does indeed change sign when the indices 1 and 2 are permuted. 
\end{rmk}

From \eqref{17}, one easily deduce that, if $\sigma:N_1\to N_2$ is a $G$-equivariant map between two $G$-manifolds $N_1$ and $N_2$, then
\beq{eq:BSG}
\BS(\sigma^*\eta)=(\sigma\times 1)^*\BS(\eta), \qquad\forall\eta\in\big(\mfgs\otimes\OO^1(N_2)\big)^G.
\eeq

The following lemma will be needed later on.

\begin{lem}\label{lambda}
Given $\ff\in\big(\mfgs\otimes\OO^0(N)\big)^G$, then 
\beq{19} \BS(d\ff)=d\lambda ,\eeq 
where $\lambda\in\OO^1(N\times G)$ is defined by the relation
\[ \lambda(v_x,L_{g*}\xi)=\ff(\Ad_g\xi)(x), 
\qquad\forall v_x\in T_x N,\;\xi\in\mfg,\;g\in G .\]
\end{lem}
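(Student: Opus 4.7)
The plan is to verify the identity as $2$-forms by a direct pointwise computation. Fix $(x, g) \in N \times G$ together with tangent vectors $(v_i, L_{g*}\xi_i)$ for $i = 1, 2$, and extend each $(v_i, L_{g*}\xi_i)$ to a product vector field $X_i = (V_i, \xi_i^L)$ on $N \times G$, where $V_i$ is any smooth extension of $v_i$ on $N$ and $\xi_i^L$ is the left-invariant vector field on $G$ with value $\xi_i$ at the identity. By the very definition of $\lambda$, one has $\lambda(X_i)|_{(y, h)} = \ff(\Ad_h \xi_i)(y)$, so all three terms in Cartan's formula
\[ d\lambda(X_1, X_2) = X_1\big(\lambda(X_2)\big) - X_2\big(\lambda(X_1)\big) - \lambda\big([X_1, X_2]\big) \]
can be computed explicitly.

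To compute $X_1(\lambda(X_2))(x, g)$, I differentiate $(y, h) \mapsto \ff(\Ad_h \xi_2)(y)$ along a curve realizing $X_1$ at $(x, g)$, e.g. $(y(t), g\exp(t\xi_1))$ with $\dot{y}(0) = v_1$. Using $\Ad_{g\exp(t\xi_1)} = \Ad_g \circ e^{t\ad_{\xi_1}}$ and the linearity of $\ff$ in its Lie-algebra argument, the derivative splits into an $N$-piece and a $G$-piece:
\[ X_1\big(\lambda(X_2)\big)(x, g) = V_1|_x\big(\ff(\Ad_g \xi_2)\big) + \ff\big(\Ad_g[\xi_1, \xi_2]\big)(x). \]
The $G$-invariance of $\ff$ in the form $\ff(\Ad_g \xi)(y) = \ff(\xi)(y \act g)$ rewrites the first summand as $d\ff(\xi_2)(v_1 \act g)$, evaluated at $x \act g$, and the symmetric identity holds for $X_2(\lambda(X_1))$. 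Since $[V_i, \xi_j^L] = 0$ and $[\xi_1^L, \xi_2^L] = [\xi_1, \xi_2]^L$, one obtains $\lambda([X_1, X_2])(x, g) = \ff(\Ad_g[\xi_1, \xi_2])(x)$. Assembling the three pieces and using $[\xi_2, \xi_1] = -[\xi_1, \xi_2]$, the two Lie-algebra-direction contributions from $X_1(\lambda(X_2))$ and $X_2(\lambda(X_1))$ combine to $+2\ff(\Ad_g[\xi_1, \xi_2])(x)$, from which subtracting $\lambda([X_1, X_2])(x, g)$ leaves a single copy, so that
\[ d\lambda\big((v_1, L_{g*}\xi_1), (v_2, L_{g*}\xi_2)\big) = d\ff(\xi_2)(v_1 \act g) - d\ff(\xi_1)(v_2 \act g) + \ff\big(\Ad_g[\xi_1, \xi_2]\big)(x). \]

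To match this with $\BS(d\ff)$ as given by \eqref{17}, it only remains to identify the third term with $d\ff(\xi_2)(\widehat{\xi_1}|_{x \act g})$. This follows from
\[ \widehat{\xi_1}|_{x \act g}\big(\ff(\xi_2)\big) = \ddtz{\ff(\xi_2)\big(x \act g \exp(t\xi_1)\big)} = \ddtz{\ff\big(\Ad_{g\exp(t\xi_1)}\xi_2\big)(x)} = \ff\big(\Ad_g[\xi_1, \xi_2]\big)(x), \]
again using $G$-invariance of $\ff$ together with $\Ad_{g\exp(t\xi_1)} = \Ad_g \circ e^{t\ad_{\xi_1}}$. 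The main obstacle in this argument is purely notational: one must keep straight the right-action invariance convention $\ff(\Ad_g\xi)(x) = \ff(\xi)(x \act g)$ and the identity for the derivative of the adjoint representation; once these are fixed, the whole calculation reduces to a single application of Cartan's formula together with the chain rule.
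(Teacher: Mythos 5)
Your proof is correct and follows essentially the same route as the paper: both compute $d\lambda$ via the Cartan formula applied to product vector fields $(V_i,\xi_i^L)$ on $N\times G$, use $\Ad_{g\exp(t\xi_1)}=\Ad_g\circ e^{t\ad_{\xi_1}}$ together with the $G$-equivariance $\ff(\Ad_g\xi)(y)=\ff(\xi)(y\act g)$ to split the derivatives into an $N$-piece and a bracket term, and match the result against formula \eqref{17} for $\BS(d\ff)$. The only cosmetic difference is the order of operations (you simplify $d\lambda$ first and then identify it with $\BS(d\ff)$, while the paper expands both sides separately), which does not affect the substance.
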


\begin{proof}
Since $\ff$ is $G$-equivariant, we have 
\beq{21} \ff(\Ad_g\xi)\;(x)=\ff(\xi)\;(x\act g) .\eeq
Take $g\in G$, $\xi_1,\xi_2\in\mfg$ and $v_1,v_2\in T_x N$.
Let $t\mapsto x_1(t)$ and $t\mapsto x_2(t)$ be two paths in $N$ originating from the same point $x$ such that $v_1=\ddtz{x_1(t)}$ and $v_2=\ddtz{x_2(t)}$.
From \eqref{17}, we get 
\begin{align*}
& \BS(d\ff)\;\big((v_1,L_{g*}\xi_1),(v_2,L_{g*}\xi_2)\big) && \\ 
=\;& \BS(d\ff)\;\big(\ddtz{(x_1(t),ge^{t\xi_1})},\ddtz{(x_2(t),ge^{t\xi_2})}\big) && \\
=\;& \ddtz{\ff(\xi_2)\;(x_1(t)\act g)}-\ddtz{\ff(\xi_1)\;(x_2(t)\act g)}
+\ddtz{\ff(\xi_2)\;(x\act (ge^{t\xi_1}))} && \\ 
=\;& \ddtz{\ff(\Ad_g\xi_2)\;(x_1(t))}-\ddtz{\ff(\Ad_g\xi_1)\;(x_2(t))}
+\ddtz{\ff(\Ad_g e^{t\ad_{\xi_1}}\xi_2)\;(x)} && \text{by \eqref{21}} 
.\end{align*}
But $\ff$ is linear in $\mfg$. Hence the last term is equal to $\ff(\Ad_g\lie{\xi_1}{\xi_2})\;(x)$. 

On the other hand, letting $\ceV{\xi_1}$ and $\ceV{\xi_2}$ be the 
left invariant vector fields on $G$ corresponding to $\xi_1$ and $\xi_2$, 
respectively, and choosing two vector fields $X_1$ and $X_2$ on $N$ such that 
$X_1|_x=v_1$ and $X_2|_x=v_2$, we obtain 
\begin{align*}
& (d\lambda )\big((v_1,L_{g*}\xi_1),(v_2,L_{g*}\xi_2)\big) \\ 
=\;& (v_1,L_{g*}\xi_1)\;\lambda(X_2,\ceV{\xi_2})-(v_2,L_{g*}\xi_2)\;\lambda(X_1,\ceV{\xi_1})
-\lambda(\lie{X_1}{X_2}_x,\lie{\ceV{\xi_1}}{\ceV{\xi_2}}_g) \\ 
=\;& \ddtz{\ff(\Ad_{ge^{t\xi_1}}\xi_2)\;(x_1(t))} - \ddtz{\ff(\Ad_{ge^{t\xi_2}}\xi_1)\;(x_2(t))} 
- \ff(\Ad_g\lie{\xi_1}{\xi_2})\;(x) \\ 
=\;& \ddtz{\ff(\Ad_{ge^{t\xi_1}}\xi_2)\;(x)}+\ddtz{\ff(\Ad_g\xi_2)\;(x_1(t))} 
- \ddtz{\ff(\Ad_{ge^{t\xi_2}}\xi_1)\;(x)} \\ 
& -\ddtz{\ff(\Ad_g\xi_1)\;(x_2(t))} - \ff(\Ad_g\lie{\xi_1}{\xi_2})\;(x) \\ 
=\;& \ddtz{\ff(\Ad_g\xi_2)\;(x_1(t))}-\ddtz{\ff(\Ad_g\xi_1)\;(x_2(t))}
+ \ff(\Ad_g\lie{\xi_1}{\xi_2})\;(x) .
\end{align*}
The result follows.
\end{proof}

\begin{lem}\label{lem:4.3}
Given $B\in\OO^2(N)^G$, define $\qq\in\big(\mfgs\otimes\OO^1(N)\big)^G$ by 
$\qq(\xi)=\widehat{\xi}\ip\bb$, $\forall\xi\in\mfg$.
Then $\BS(\qq)=-\dels B$, where $\dels:\OO^2(N)\to\OO^2(N\rtimes G)$
is the coboundary operator $\dels=\source^*-\target^*$
associated to the transformation groupoid $N\rtimes G\toto N$. 
\end{lem}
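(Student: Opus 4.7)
The plan is to evaluate both sides of the desired identity $\BS(\qq)=-\dels B$ as 2-forms on $N\rtimes G$ at an arbitrary point $(x,g)$, applied to an arbitrary pair of tangent vectors $\bigl((v_1,L_{g*}\xi_1),(v_2,L_{g*}\xi_2)\bigr)$, and then match the resulting expressions term by term. Since $\BS(\qq)$ is already written down explicitly in \eqref{17}, the real work is to compute $\dels B=\source^*B-\target^*B$ in the same coordinates.

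First I would compute the differentials of the source and target maps of the transformation groupoid $N\rtimes G\toto N$. The identity $\target(x,g)=x$ gives at once $\target_*(v,L_{g*}\xi)=v$, whereas differentiating $\source(x,g)=x\act g$ along a curve $(x(t),g\,e^{t\xi})$ and using the definition of $\widehat{\xi}$ yields
\[
\source_*(v,L_{g*}\xi)=v\act g+\widehat{\xi}|_{x\act g}.
\]
Plugging these into $\source^*B$ and expanding by bilinearity, I get four terms; the diagonal term $B(v_1\act g,v_2\act g)$ equals $B(v_1,v_2)=\target^*B$ by $G$-invariance of $B$ and therefore cancels against $-\target^*B$, leaving
\[
(\dels B)\bigl((v_1,L_{g*}\xi_1),(v_2,L_{g*}\xi_2)\bigr)
=B(v_1\act g,\widehat{\xi_2}|_{x\act g})+B(\widehat{\xi_1}|_{x\act g},v_2\act g)+B(\widehat{\xi_1}|_{x\act g},\widehat{\xi_2}|_{x\act g}).
\]

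Next I would rewrite each of the three surviving terms in terms of $\qq(\xi)=\widehat{\xi}\ip B$, using the antisymmetry of $B$: the first term becomes $-\qq(\xi_2)|_{x\act g}(v_1\act g)$, the second becomes $\qq(\xi_1)|_{x\act g}(v_2\act g)$, and the third becomes $\qq(\xi_1)|_{x\act g}(\widehat{\xi_2}|_{x\act g})$. Negating, I obtain
\[
-\dels B=\qq(\xi_2)|_{x\act g}(v_1\act g)-\qq(\xi_1)|_{x\act g}(v_2\act g)-\qq(\xi_1)|_{x\act g}(\widehat{\xi_2}|_{x\act g}).
\]
On the other hand, substituting $\eta=\qq$ into formula \eqref{17} gives
\[
\BS(\qq)=\qq(\xi_2)|_{x\act g}(v_1\act g)-\qq(\xi_1)|_{x\act g}(v_2\act g)+\qq(\xi_2)|_{x\act g}(\widehat{\xi_1}|_{x\act g}).
\]
The first two terms of the two expressions agree on the nose, so matching the two formulas reduces to showing $\qq(\xi_2)(\widehat{\xi_1})+\qq(\xi_1)(\widehat{\xi_2})=0$, i.e.\ $B(\widehat{\xi_2},\widehat{\xi_1})+B(\widehat{\xi_1},\widehat{\xi_2})=0$, which is immediate from the antisymmetry of $B$.

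The step I expect to require most care is keeping track of the base points when expanding $\source^*B$: one has to remember that after applying $\source_*$ everything is evaluated at $x\act g$, not at $x$, and it is precisely the $G$-invariance of $B$ that makes the diagonal term $B(v_1\act g,v_2\act g)$ cancel the $\target^*B$ contribution. Once the bookkeeping is done correctly, the identity falls out by antisymmetry with no further computation.
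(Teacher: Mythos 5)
Your proof is correct and follows essentially the same route as the paper: compute $\target_*$ and $\source_*$ for the transformation groupoid, expand $\dels B$ using bilinearity and $G$-invariance of $B$, and compare with formula \eqref{17} term by term via the antisymmetry of $B$. No gaps.
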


\begin{proof}
Note that for any $v_x\in T_xN$ and $\xi\in\mfg$,
\[ \target_*(v_x,L_{g*}\xi)=v_x \qquad \text{and} \qquad 
\source_*(v_x,L_{g*}\xi)=v_x\act g+\widehat{\xi}|_{x\act g} 
.\]
Thus, using the $G$-invariance of $\bb$, we obtain,
for any $v_1,v_2\in T_xN$, $\xi_1,\xi_2\in\mfg$ and $g\in G$,  
\begin{align*}
& (\dels B)\;\big((v_1,L_{g*}\xi_1),(v_2,L_{g*}\xi_2)\big) \\ 
=\;& \bb(\source_*(v_1,L_{g*}\xi_1),\source_*(v_2,L_{g*}\xi_2)) 
-\bb(\target_*(v_1,L_{g*}\xi_1),\target_*(v_2,L_{g*}\xi_2)) \\ 
=\;& \bb(\widehat{\xi_1}|_{x\act g}, v_2\act g) 
+ \bb(v_1\act g,\widehat{\xi_2}|_{x \act g}) 
+ \bb(\widehat{\xi_1}|_{x\act g},\widehat{\xi_2}|_{x\act g}) 
.\end{align*} 
On the other hand, we have 
\begin{align*}
& \BS(\qq)\;\big((v_1,L_{g*}\xi_1),(v_2,L_{g*}\xi_2)\big) \\ 
=\;& \qq(\xi_2)(v_1\act g)
- \qq(\xi_1)(v_2\act g)
+ \qq(\xi_2)(\widehat{\xi_1}|_{x\act g} ) \\ 
%=\;& \qq(\xi_2)( v_1\act g)
%- \qq(\xi_1)( v_2\act g)
%+ \qq(\xi_2)\;\big(\ddtz{(\source(\gam)\act ge^{t\xi_1})}\big) \\ 
=\;& \bb(\widehat{\xi_2}|_{x\act g}, v_1\act g) 
- \bb(\widehat{\xi_1}|_{x\act g},v_2\act g) 
+ \bb(\widehat{\xi_2}|_{x\act g},\widehat{\xi_1}|_{x\act g}) 
.\end{align*}
The conclusion thus follows.
\end{proof}

As an immediate consequence, we have

\begin{cor}\label{cor:4.4}
The BCWZ map $\BS$ of Proposition~\ref{psi} maps the exact equivariant 3-forms $B_G^3(N)$ 
to the coboundaries $B_{\DR}^3\big((N\rtimes G)\simplicial\big)$. 
Therefore $\BS$ induces an isomorphism in cohomology: 
$H^3_G(N)\xto{\isomorphism}H^3_{\DR}\big((N\rtimes G)\simplicial\big)$.
\end{cor}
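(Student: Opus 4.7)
My plan is to compute $\BS$ on a general exact equivariant 3-form, exhibit an explicit primitive in the simplicial total complex, and then deduce the cohomological isomorphism by combining this coboundary-preservation property with Proposition~\ref{psi}.

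First I write a generic $\beta \in \Omega_G^2(N)$ as $\beta = B + f$ with $B \in \Omega^2(N)^G$ and $f \in \bigl(\mfgs \otimes \Omega^0(N)\bigr)^G$. The Cartan formula gives
\[ d_G \beta = dB + (df - Q) ,\]
where $Q(\xi) = \widehat{\xi} \ip B$ lies in $\bigl(\mfgs \otimes \Omega^1(N)\bigr)^G$. Applying $\BS$ term-by-term and invoking Lemma~\ref{lambda} (which gives $\BS(df) = d\lambda$, with $\lambda(v_x, L_{g*}\xi) = f(\Ad_g\xi)(x)$) together with Lemma~\ref{lem:4.3} (which gives $\BS(Q) = -\dels B$), I obtain
\[ \BS(d_G\beta) = dB + \dels B + d\lambda \in \Omega^3(N) \oplus \Omega^2(N \rtimes G) .\]
The natural candidate primitive is $B - \lambda \in \Omega^2(N) \oplus \Omega^1(N \rtimes G)$, and a direct computation using the total differential $D = (-1)^p d + \partial$ yields
\[ D(B - \lambda) = dB + \dels B + d\lambda - \partial\lambda .\]
Thus the first claim of the corollary reduces to the identity $\partial\lambda = 0$.

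This identity is where I expect the bulk of the computation to lie. On a composable pair $\bigl((x, g_1), (x \act g_1, g_2)\bigr) \in (N\rtimes G)_2$, I evaluate $\epsilon_0^*\lambda - \epsilon_1^*\lambda + \epsilon_2^*\lambda$. The pushforward along the multiplication face $\epsilon_1$ requires the tangent vector to $g_{1,t} g_{2,t}$ at $t=0$, which is left-invariant with coefficient $\xi_2 + \Ad_{g_2\inv}\xi_1$. Substituting the explicit formula for $\lambda$ and applying the $G$-equivariance relation $f(\Ad_g\xi)(x) = f(\xi)(x \act g)$, the three terms cancel in pairs. This proves $\BS\bigl(B_G^3(N)\bigr) \subset B_{\DR}^3\bigl((N\rtimes G)\simplicial\bigr)$, so $\BS$ descends to a well-defined map $H_G^3(N) \to H_{\DR}^3\bigl((N\rtimes G)\simplicial\bigr)$.

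For the isomorphism statement, Proposition~\ref{psi} identifies $\BS\bigl(Z_G^3(N)\bigr)$ with $Z_{\DR}^3\bigl((N\rtimes G)\simplicial\bigr) \cap \bigl(\Omega^3(N) \oplus \Omega^2(N \rtimes G)\bigr)$. Injectivity of the induced map on cohomology follows by a reverse diagram chase: if $\BS(\omega) = D\beta$ with $\beta = B' + \lambda' + \mu$, matching the four components, $G$-averaging $B'$ via Lemma~\ref{Murray}(b), and then inverting the construction of $\lambda'$ in the style of Lemma~\ref{lambda} exhibits $\omega$ as $d_G$ of an equivariant 2-form. Surjectivity rests on the standard quasi-isomorphism between the Cartan and simplicial models of equivariant de~Rham cohomology: any degree-$3$ $D$-cocycle is cohomologous to one supported in $\Omega^3(N) \oplus \Omega^2(N \rtimes G)$, which by Proposition~\ref{psi} lies in the image of $\BS$. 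The principal obstacle is the identity $\partial\lambda = 0$; the surjectivity step, while not automatic, proceeds by a standard double-complex diagram chase combined with $G$-averaging.
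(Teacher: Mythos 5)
Your treatment of the first assertion is correct and coincides with the paper's: decompose $\beta=B+f$, apply Lemma~\ref{lambda} and Lemma~\ref{lem:4.3} to get $\BS(\dg\beta)=dB+\dels B+d\lambda$, and reduce everything to the identity $\dels\lambda=0$, which you verify by the same computation (expanding the multiplication face, using linearity in $\mfg$ and the equivariance $\ff(\Ad_g\xi)(x)=\ff(\xi)(x\act g)$) that the paper carries out for its equation \eqref{dellam}. Your primitive $B-\lambda$ is in fact the one consistent with the stated convention $D=(-1)^p d+\del$, so this half of the corollary is fine.

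The gap is in the isomorphism half. The reduction ``any degree-$3$ $D$-cocycle is cohomologous to one supported in $\OO^3(N)\oplus\OO^2(N\rtimes G)$'' is not a generic double-complex diagram chase: for an arbitrary Lie groupoid the analogous statement is false. What makes it work here is the exactness of the rows $\OO^0\big((N\rtimes G)_2\big)\to\OO^0\big((N\rtimes G)_3\big)\to\OO^0\big((N\rtimes G)_4\big)$ and $\OO^1\big((N\rtimes G)_1\big)\to\OO^1\big((N\rtimes G)_2\big)\to\OO^1\big((N\rtimes G)_3\big)$, which hold because $N\rtimes G\toto N$ is a \emph{proper} groupoid ($G$ compact); the paper invokes \cite[Proposition~1]{\Crainic} and \cite[Lemma~1.5]{\TX} for exactly these two facts. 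Your ``$G$-averaging'' is indeed the mechanism behind them, but as written you neither state nor prove the needed row-exactness, and appealing to ``the standard quasi-isomorphism between the Cartan and simplicial models'' is close to assuming the conclusion, since the corollary is precisely the assertion that $\BS$ realizes that quasi-isomorphism in degree $3$. Separately, your injectivity step cites Lemma~\ref{Murray}(b), which concerns the $\del$-complex of the groupoid $X\times_M X\toto X$ attached to a surjective submersion; it says nothing about the transformation groupoid $N\rtimes G\toto N$ and cannot be used to average or correct $B'$ and $\lambda'$ there. The correct route is again the properness-based exactness above (kill the $\OO^0\big((N\rtimes G)_2\big)$ component of $\beta$, then show the resulting $\lambda'$ with $\dels\lambda'=0$ differs from one of the form produced by Lemma~\ref{lambda} by something in the image of $\dels$), combined with the injectivity of $\BS$ on cocycles from Proposition~\ref{psi}.
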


\begin{proof}
The 1-form $\lambda\in\OO^1(N\times G)$ defined in Lemma~\ref{lambda} satisfies 
\beq{dellam} \dels\lambda=0 .\eeq 
Indeed, since $\source_*(v_x,L_{g*}\xi)=\target_*(w_y,L_{h*}\eta)$ if, and only if, 
$y=x\act g$ and $w_y=v_x\act g+\widehat{\xi}_{x\act g}$, making use of the linearity in $\mfg$ and the $G$-equivariance of $f$ 
we obtain 
\begin{align*}
& \dels\lambda\big((v_x,L_{g*}\xi),(w_y,L_{h*}\eta)\big) \\ 
=\;& \lambda(w_y,L_{h*}\eta)-\lambda\big(v_x,L_{gh*}(\Ad_{h\inv}\xi+\eta)\big)+\lambda(v_x,L_{g*}\xi) \\ 
=\;& f(\Ad_h\eta)\;(y)-f\big(\Ad_{gh}(\Ad_{h\inv}\xi+\eta)\big)\;(x)+f(\Ad_g\xi)\;(x) \\ 
=\;& f(\Ad_h\eta)\;(x\act g)-f\big(\Ad_g(\Ad_h\eta)\big)\;(x) \\ 
=\;& 0.
\end{align*}
Since 
\[ \OO_G^2(N)=\OO^2(N)^G\oplus\big(\mfgs\otimes\OO^0(N)\big)^G, \] 
any element of $B_G^3(N)=\dg\big(\OO_G^2(N)\big)$ can be written as $\dg(\bb+\ff)$ for some 
$\bb\in\OO^2(N)^G$ and $\ff\in\big(\mfgs\otimes\OO^0(N)\big)^G$. 
By definition, $\dg\bb=d\bb-\qq$, where $\qq$ is defined as in Lemma~\ref{lem:4.3}, and $\dg\ff=d\ff$. 
Therefore, 
\begin{align*} 
\BS\big(\dg(\bb+\ff)\big) =& \BS(d\bb-\qq+d\ff) && \\ 
=& d\bb+\dels\bb+d\lambda && \text{by Lemma~\ref{lambda} and \ref{lem:4.3}} \\ 
=& \Ds(\bb+\lambda) && \text{by \eqref{dellam}} . 
\end{align*}
Thus $\BS$ not only maps closed equivariant 3-forms to cocycles of 
$\OO_{\DR}^3\big((N\rtimes G)\simplicial\big)$ (see Proposition~\ref{psi}) but also 
exact equivariant 3-forms to coboundaries of $\OO_{\DR}^3\big((N\rtimes G)\simplicial\big)$. 
Hence $\BS$ induces a homomorphism $H_G^3(N)\to H_{\DR}^3\big((N\rtimes G)\simplicial\big)$ in cohomology. 
Actually, the latter is an isomorphism, for $\BS$ is injective on the level of cocycles (according to Proposition~\ref{psi}) and any 3-cocycle in $\OO_{\DR}^3\big((N\rtimes G)\simplicial\big)$ is cohomologous to a 3-cocycle of the form 
$\alpha+\beta$, where $\alpha\in\OO^3(N)$ and $\beta\in\OO^2(N\times G)$. 
Indeed, since $N\rtimes G\toto N$ is a proper groupoid, the sequence 
\[ \OO^0\big((N\rtimes G)_2\big) \xto{\dels} 
\OO^0\big((N\rtimes G)_3\big) \xto{\dels} 
\OO^0\big((N\rtimes G)_4\big) \] 
is exact \cite[Proposition~1]{\Crainic} 
and, moreover, 
\[ \OO^1\big((N\rtimes G)_1\big) \xto{\dels} 
\OO^1\big((N\rtimes G)_2\big) \xto{\dels} 
\OO^1\big((N\rtimes G)_3\big) \] 
is also exact \cite[Lemma~1.5]{\TX}.
\end{proof}

\subsection{Main theorem}
\label{sec:4.2}

The Morita morphism 
\[ \xymatrix{ \gm\rtimes G \dar[d] \ar[r]^{\pih} & M\rtimes G \dar[d] \\ 
X \ar[r]_{\pi} & M } \]
as in \eqref{7} induces  a map of cochain complexes 
\[ \OO^k_{\DR}\big((M\rtimes G)\simplicial\big)\xto{\pih^*}\OO^k_{\DR}\big((\gm\rtimes G)\simplicial\big) \] 
which gives an isomorphism in cohomology: 
\[ H^k_{\DR}\big((M\rtimes G)\simplicial\big)\xto{\isomorphism}H^k_{\DR}\big((\gm\rtimes G)\simplicial\big) .\] 
By the symbol $\mu$, we will denote both the composition
\[ Z^3_G (M)\xto{\BS}Z_{\DR}^3\big((M\rtimes G)\simplicial\big)
\xto{\pih^*}Z^3_{\DR}\big((\gm\rtimes G)\simplicial\big) \]
%\[ \mu:Z^3_G(M)\to Z^3_{\DR}\big((\gm\rtimes G)\simplicial\big) .\] 
and the induced isomorphism 
\[ H^3_G (M)\xto{\isomorphism}H^3_{\DR}\big((\gm\rtimes G)\simplicial\big) \] 
in cohomology. 

The main theorem can be stated as follows.
 
\begin{thm}\label{26}
Let $\gmt\xto{\pp}\gm\toto X$ be a $G$-equivariant bundle gerbe
over a $G$-manifold $M$ with equivariant connection $\theta$, equivariant
curving $\bg$ and equivariant 3-curvature $\etag$. 
Then \[ \mu\eqcls{\etag}=\eqcls{\zeta-\pr^*\omega} \] 
in $H_{\DR}^3\big((\gm\rtimes G)\simplicial\big)$.
In other words, the isomorphism $\mu$ maps Meinrenken's equivariant Dixmier-Douady class 
to the Behrend-Xu-Dixmier-Douady class. 
\end{thm}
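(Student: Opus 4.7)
The plan is to produce an explicit 2-cochain $\alpha \in \OO_{\DR}^2\big((\gm\rtimes G)\simplicial\big)$ whose total differential realizes $\mu(\etag) - (\zeta - \pr^*\omega)$ as a coboundary. The cochain will be built from the equivariant curving $\bg$ and the 1-form $\lambda$ of Lemma~\ref{lambda}.

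First I would unpack the equivariant data by Cartan bidegree. Write $\bg = \bb + \ff$ with $\bb \in \OO^2(X)^G$ and $\ff \in (\mfgs \otimes C^\infty(X))^G$, and correspondingly $\etag = \etag^{(0)} + \etag^{(1)}$. Splitting the identities $\pi^*\etag = \dg\bg$, $\curv = \del\bg$, and $\dg\theta = \pp^*\curv$ bidegree-wise yields
\[
\pi^*\etag^{(0)} = d\bb,\qquad \pi^*\etag^{(1)}(\xi) = d\ff(\xi) - \widehat\xi\ip\bb,\qquad \del\bb = \omega,\qquad \del\ff = \kappa,
\]
where $\omega \in \OO^2(\gm)^G$ is determined by $d\theta = \pp^*\omega$ and $\kappa \in (\mfgs \otimes C^\infty(\gm))^G$ is determined by $\pp^*\kappa(\xi) = \widehat\xi\ip\theta$. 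The latter relation, combined with Lemma~\ref{lem:3.3}, identifies $\zeta(V_1,V_2) = \kappa(\xi_1)(\gam_2)$ for a composable pair $((\gam_1,g_1),(\gam_2,g_2))$ with $\mfg$-components $\xi_i$ of the tangents.

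Set $\alpha := \bb + \tilde\lambda$, where $\tilde\lambda \in \OO^1(\gm\rtimes G)$ is the pullback of the 1-form $\lambda$ of Lemma~\ref{lambda} (attached to $\ff$) along $\gm\rtimes G \to X\rtimes G$, $(x,y,g) \mapsto (y,g)$. Explicitly,
\[
\tilde\lambda\big|_{(x,y,g)}\big((u,v), L_{g*}\xi\big) = \ff(\Ad_g \xi)(y).
\]

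I would then verify $D\alpha = \mu(\etag) - (\zeta - \pr^*\omega)$ bidegree by bidegree. Bidegree $(0,3)$ reduces to $d\bb = \pi^*\etag^{(0)}$. For bidegree $(1,2)$, expand $\pih^*\psi(\etag^{(1)})$, $\pr^*\omega$, and $\delr\bb$ on a tangent vector $((u_i,v_i), L_{g*}\xi_i)$ at $(x,y,g)$ of $\gm\rtimes G$: the various $\bb$-contractions coming from the $\widehat\xi\ip\bb$ parts of $\pi^*\etag^{(1)}$, from $\pr^*\omega = s^*\bb - t^*\bb$, and from the expansion of $\delr\bb$ (using $(\source^\rtimes)_*(V_i) = v_i\act g + \widehat{\xi_i}|_{y\act g}$) all cancel pairwise through $G$-invariance of $\bb$ and skew-symmetry of 2-forms (notably $\bb(\widehat{\xi_1},\widehat{\xi_2}) + \bb(\widehat{\xi_2},\widehat{\xi_1}) = 0$). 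What remains is precisely $(s\times 1_G)^*\psi(d\ff) = d\tilde\lambda$ by Lemma~\ref{lambda}. For bidegree $(2,1)$, the three simplicial face-pullbacks of $\tilde\lambda$ on a composable pair, together with the $G$-equivariance $\ff(\Ad_h\xi)(x) = \ff(\xi)(x\act h)$, give
\[
\delr\tilde\lambda(V_1,V_2) = \ff(\xi_1)(x_2) - \ff(\xi_1)(y_2) = -\kappa(\xi_1)(\gam_2) = -\zeta(V_1,V_2),
\]
with $\gam_2 = (x_2,y_2)$.

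The main obstacle is the $(1,2)$ verification: three independent sources each contribute $\bb$-contractions, and their pairwise cancellation requires a careful combined use of $G$-invariance of $\bb$ and antisymmetry of 2-forms. This coordinated cancellation precisely encodes the compatibility between Meinrenken's Cartan-model 3-curvature and Behrend-Xu's simplicial pseudo-curvature, and it is what makes the theorem true at the cocycle level.
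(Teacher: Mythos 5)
Your strategy coincides with the paper's: decompose $\bg=\bb+\ff$ and $\etag$ by Cartan bidegree, pull the $1$-form $\lambda$ of Lemma~\ref{lambda} back to $\gm\rtimes G$ along $\source$ (your $\tilde{\lambda}$ is the paper's $\lambdap=(\source\times\id)^*\lambda$), and exhibit an explicit $2$-cochain built from $\bb$ and $\tilde{\lambda}$ whose total differential realizes $\mu(\etag)-(\zeta-\pr^*\omega)$ as a coboundary. The paper packages the very same computation as Lemma~\ref{22} (namely $\BS(\source^*d\ff)=d\lambdap$ and $\delr\lambdap=\zeta$) together with Lemma~\ref{lem:4.4} (the cancellation of the $\bb$-contractions, i.e.\ $\delr\bb-\pr^*\omega=-\BS(\source^*\qq)$), which is exactly your bidegree $(1,2)$ verification.

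There is, however, a sign error that makes your cocycle-level identity false as stated. Splitting $\dg\theta=\pp^*\curv$ in the Cartan model yields $(\pp^*\del\ff)(\xi)=-\widehat{\xi}\ip\theta$ (this is \eqref{9}), whereas you define $\kappa=\del\ff$ by $\pp^*\kappa(\xi)=+\widehat{\xi}\ip\theta$. As a consequence $\delr\tilde{\lambda}=+\zeta$ (this is \eqref{25}), not $-\zeta$, and with the total differential $D=(-1)^p d+\del$ used here the correct primitive is $\bb-\tilde{\lambda}$: one has $\mu(\etag)-(\zeta-\pr^*\omega)=d\bb+\delr\bb+d\lambdap-\delr\lambdap=D(\bb-\lambdap)$, while $D(\bb+\tilde{\lambda})$ disagrees with the desired difference in both bidegrees $(1,2)$ and $(2,1)$. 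Once you correct the sign of $\kappa$ and replace $\bb+\tilde{\lambda}$ by $\bb-\tilde{\lambda}$ (and track the $(-1)^p$ in $D$), your outline reproduces the paper's proof.
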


Given two Morita equivalent $G$-equivariant bundle gerbes 
$\gmt\xto{\pp}\gm\toto X$ and $\gmt'\xto{\pp'}\gm'\toto X'$
over a $G$-manifold $M$, it is simple to see that
$\gmG$ and $\gmGG$ are Morita equivalent central $S^1$-extensions.
Hence they have isomorphic Behrend-Xu-Dixmier-Douady classes according
to \cite{\BX}. As a consequence of Theorem~\ref{26}, we have

\begin{cor}
Morita equivalent $G$-equivariant bundle gerbes have isomorphic equivariant Dixmier-Douady classes in Meinrenken's sense.
\end{cor}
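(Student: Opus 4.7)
The plan is to exhibit an explicit primitive $\beta \in \OO_{DR}^2((\gm\rtimes G)\simplicial)$ with $D\beta = \mu(\etag) - (\zeta - \pr^*\omega)$. First, I decompose everything by Cartan weight: write $\etag = \eta + \widetilde{\eta}$ with $\eta \in \OO^3(M)^G$ and $\widetilde{\eta} \in (\mfgs \otimes \OO^1(M))^G$, and $\bg = B + \widetilde{B}$ with $B \in \OO^2(X)^G$ and $\widetilde{B} \in (\mfgs \otimes \OO^0(X))^G$. Then $\pi^*\etag = \dg\bg$ splits into $dB = \pi^*\eta$ and $d\widetilde{B}(\xi) - \widehat{\xi}\ip B = \pi^*\widetilde{\eta}(\xi)$, while $\curv = \del\bg$ splits into $\omega = \del B$ (with $\omega$ the ordinary curvature, $d\theta = \pp^*\omega$) and $\widetilde{\omega}(\xi) = \del\widetilde{B}(\xi)$, where $\widetilde{\omega}(\xi) = -\widehat{\xi}\ip\theta$ is obtained by descending the $S^1$-invariant function $\widehat{\xi}\ip\theta$ from $\gmt$ to $\gm$.

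I next identify the bidegree components of $\mu(\etag) - (\zeta - \pr^*\omega)$ in $\OO_{DR}^3((\gm\rtimes G)\simplicial)$. Applying \eqref{eq:BSG} to the $G$-equivariant Morita map $\pi' : \gm \to M$ gives $\pih^*\BS(\widetilde{\eta}) = \BS((\pi')^*\widetilde{\eta})$ as 2-forms on $\gm \times G$, so $\mu(\etag)$ contributes $\pi^*\eta$ in bidegree $(0, 3)$ and $\pih^*\BS(\widetilde{\eta})$ in bidegree $(1, 2)$, whereas $\zeta - \pr^*\omega$ contributes $-\pr^*\omega$ in bidegree $(1, 2)$ and $\zeta$ in bidegree $(2, 1)$.

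I then take the primitive $\beta := B - \lambdap$, with $B$ in bidegree $(0, 2)$ and $\lambdap \in \OO^1(\gm\rtimes G)$ in bidegree $(1, 1)$ defined by
\[ \lambdap|_{(x, y, g)}(u_x, u_y, L_{g*}\xi) := \widetilde{B}(\Ad_g\xi)(y) ; \]
this is the pullback, under the smooth map $(x, y, g) \mapsto (y, g)$, of the 1-form that Lemma~\ref{lambda} attaches to $\widetilde{B}$. Since $D = (-1)^p d + \del$, the equation $D\beta = \mu(\etag) - (\zeta - \pr^*\omega)$ unpacks into three identities. Bidegree $(0, 3)$ is $dB = \pi^*\eta$, immediate. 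Bidegree $(2, 1)$ is $-\del\lambdap = -\zeta$: a direct face-map computation on $(\gm\rtimes G)_2$ using $\widetilde{B}(\Ad_g\xi)(a) = \widetilde{B}(\xi)(a\act g)$, the composability constraint $y_1\act g_1 = x_2$, Lemma~\ref{lem:3.3}'s formula $\widehat{\xi_1}\ip\theta = -\widetilde{\omega}(\xi_1)$, and $\widetilde{\omega}(\xi) = \del\widetilde{B}(\xi)$ collapses $\del\lambdap$ to $\widetilde{B}(\xi_1)(x_2) - \widetilde{B}(\xi_1)(y_2) = \zeta$.

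The main obstacle is the bidegree $(1, 2)$ identity $\del B + d\lambdap = \pih^*\BS(\widetilde{\eta}) + \pr^*\omega$. Expanding $\pih^*\BS(\widetilde{\eta})$ by \eqref{17} and substituting $\pi^*\widetilde{\eta}(\xi) = d\widetilde{B}(\xi) - \widehat{\xi}\ip B$ splits the right-hand side into three $B$-dependent and three $\widetilde{B}$-derivative terms. The $B$-terms reassemble, by $G$-invariance of $B$ applied to the translated vectors $u_y\act g$ and $\widehat{\xi}|_{y\act g}$, into exactly $\del B - \pr^*\omega$; the $\widetilde{B}$-derivative terms match $d\lambdap$, computed as the pullback of $\BS(d\widetilde{B})$ via Lemma~\ref{lambda}, together with the identity $\widehat{\xi_1}\widetilde{B}(\xi_2) = \widetilde{B}([\xi_1, \xi_2])$ obtained by differentiating $\widetilde{B}(\Ad_g\xi)(a) = \widetilde{B}(\xi)(a\act g)$ in $g$. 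The principal bookkeeping hazard is tracking the right-$G$ translation of tangent vectors at $y\act g$ and enforcing the sign $(-1)^1$ from $D$ on the bidegree $(1, 1)$ piece $-\lambdap$.
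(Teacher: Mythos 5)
You have proved the wrong statement. What you present is a proof of Theorem~\ref{26} --- that for a \emph{single} $G$-equivariant bundle gerbe one has $\mu\eqcls{\etag}=\eqcls{\zeta-\pr^*\omega}$ --- and as such it is sound and essentially identical to the paper's own argument: your primitive $\beta=B-\lambdap$ is exactly what the paper's computation produces, since $D(B-\lambdap)=dB+\delr B+d\lambdap-\delr\lambdap$ and the paper establishes $\delr\lambdap=\zeta$ (Lemma~\ref{22}), $d\lambdap=\BS(\source^*d\ff)$ (Lemma~\ref{lambda}), and $\delr B-\pr^*\omega=-\BS(\source^*\qq)$ (Lemma~\ref{lem:4.4}). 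But the corollary you were asked to prove concerns \emph{two} Morita equivalent $G$-equivariant bundle gerbes $\gmgerbe$ and $\gmgerbee$ and asserts that their Meinrenken classes coincide in $H^3_G(M)$. Your proposal never introduces a second gerbe and never uses the Morita equivalence, so it cannot yield the conclusion.

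To close the gap you need three further ingredients, none of which appear in your write-up: (i) a $G$-equivariant Morita equivalence of the bundle gerbes induces a Morita equivalence of the central $S^1$-extensions $\gmG$ and $\gmGG$ (the $G$-equivariant bitorsor $Z$ yields a bitorsor for the crossed-product extensions); (ii) Morita equivalent central $S^1$-extensions have equal Behrend--Xu--Dixmier--Douady classes under the canonical identification of $H^3_{\DR}\big((\gm\rtimes G)\simplicial\big)$ with $H^3_{\DR}\big((\gm'\rtimes G)\simplicial\big)$ --- this is the Behrend--Xu result quoted in Section~\ref{sec:3.1}, not something your cocycle computation gives; and (iii) the maps $\mu$ and $\mu'$ are \emph{injective} and compatible with that identification: both factor as $\pih^*\rond\BS$ through $H^3_{\DR}\big((M\rtimes G)\simplicial\big)$, where $\BS$ is an isomorphism by Corollary~\ref{cor:4.4} and the pullbacks by the Morita morphisms are isomorphisms. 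Only with (i)--(iii) in hand does Theorem~\ref{26}, applied to each gerbe separately, force $\eqcls{\etag}=\eqcls{\etag'}$ in $H^3_G(M)$. In short: your computation is the main theorem's proof, and the corollary is the (nontrivial) deduction from it that you have omitted entirely.
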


\begin{rmk}
The above corollary asserts the existence of a map
\[ \left\{\text{\parbox{4.7cm}{Morita equivalence classes of
\mbox{$G$-equivariant} bundle gerbes over $M$}}\right\} \to H^3_G(M,\ZZ) ,\]
which is easily seen to be injective.
It is not clear though if this map is surjective
since requiring that a gerbe be $G$-equivariant may seem too strong 
(see Remark~2.8 in \cite{\Meinrenken}).
Recently, however, Tu-Xu proved that the above map is indeed
also surjective \cite{\TX}.
\end{rmk}

\subsection{Proof of the main theorem}
\label{sec:4.3}

First of all, let us wrap off the conditions
defining equivariant connections, equivariant curvings and equivariant
3-curvatures.
Since $\OOG^2(\gm)=\OO^2(\gm)^G\oplus\big(\mfgs\otimes\OO^0(\gm)\big)^G$, 
the equivariant curvature decomposes as 
\[ \curv=\omega+\phi ,\]
where $\omega\in\OO^2(\gm)^G$ and $\phi\in\big(\mfgs\otimes\OO^0(\gm)\big)^G$.
From \eqref{1}, we obtain 
\[ d\theta-\widehat{\xi}\ip\theta=\pp^*\omega+(\pp^*\phi)(\xi), \qquad \forall\xi\in\mfg ,\] 
which is equivalent to the pair of equations 
\begin{gather*}
d\theta=\pp^*\omega \\
-\widehat{\xi}\ip\theta=(\pp^*\phi)(\xi),\qquad\forall\xi\in\mfg 
\end{gather*}
Since $\OOG^2(X)=\OO^2(X)^G\oplus\big(\mfgs\otimes\OO^0(X)\big)^G$, 
the curving decomposes as 
\[ \bg=\bb+\ff ,\]
where $\bb\in\OO^2(X)^G$ and $\ff\in\big(\mfgs\otimes\OO^0(X)\big)^G$.
From \eqref{2}, we obtain 
\[ \omega+\phi=\del(\bb+\ff) \] 
or, equivalently,
\begin{gather}
\omega=\del\bb \label{888} \\
\phi=\del\ff \nonumber 
.\end{gather}
Hence $\delt\ff=\pp^*\del\ff=\pp^*\phi$ and 
\beq{9} (\delt\ff)(\xi)=(\pp^*\phi)(\xi)=-\widehat{\xi}\ip\theta .\eeq
Since $\OOG^3(M)=\OO^3(M)^G\oplus\big(\mfgs\otimes\OO^1(M)\big)^G$, 
the 3-curvature decomposes as 
\[ \etag=\alpha+\eta ,\]
where $\alpha\in\OO^3(M)^G$ and $\eta\in\big(\mfgs\otimes\OO^1(M)\big)^G$.
From \eqref{3}, we obtain 
\[ \pi^*(\alpha+\eta)=\dg(\bb+\ff)=d\bb-\widehat{\xi}\ip\bb+d\ff \] 
and it thus follows that
\begin{gather}
\pi^*\alpha=d\bb \label{11} \\
\pi^*\eta=d\ff-\widehat{\xi}\ip\bb \label{12} 
.\end{gather}

We will need a few lemmas.

%\begin{lem}\label{18}
%For any $\xi,\xi'\in\mfg$, $g,g'\in G$, $\delta_{\zt}\in T_{\zt}\gmt$ and  $\delta'_{\zt'}\in T_{\zt'}\gmt$ 
%such that $\sourcer_*(\delta_{\zt},L_{g*}\xi)=\targetr_*(\delta'_{\zt'},L_{g'*}\xi')$, one has 
%\[ (\pp^*\zeta) \big((\delta_{\zt},L_{g*}\xi),(\delta'_{\zt'},L_{g'*}\xi')\big)=\widehat{\xi}_{\zt'}\ip\theta .\]
%\end{lem}
%
%\begin{proof}
%\begin{align*}
%& (\pp^*\zeta) \big((\delta_{\zt},L_{g*}\xi),(\delta'_{\zt'},L_{g'*}\xi')\big) && \\ 
%=\;& \zeta\big((\pp_*\delta_{\zt},R_{g*}\Ad_g\xi),(\pp_*\delta'_{\zt'},R_{g'*}\Ad_{g'}\xi')\big) && \\ 
%=\;& \widehat{(\Ad_g\xi)}\ip\theta = (\widehat{\xi}_{\zt'}\act g\inv)\ip\theta 
%&& \text{by \eqref{14}} \\ 
%=\;& \widehat{\xi}_{\zt'}\ip\theta && \text{since $\theta$ is $G$-invariant} \qedhere
%\end{align*}
%\end{proof}

\begin{lem}\label{22}
We have $\eqcls{\BS(\source^*d\ff)-\zeta}=0$ in $H^3_{\DR}\big((\gm\rtimes G)\simplicial\big)$.
\end{lem}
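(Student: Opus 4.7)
The plan is to construct an explicit primitive: I will exhibit a $1$-form $\Lambda\in\OO^1(\gm\rtimes G)$ of total degree $2$ such that
\[ D(-\Lambda)=d\Lambda-\del\Lambda=\BS(\source^*d\ff)-\zeta, \]
which witnesses that the difference is a coboundary in the total complex $\OO_{\DR}^{\com}\big((\gm\rtimes G)\simplicial\big)$. Note that both forms on the right live in total degree $3$ but in opposite bidegrees $(2,1)$ and $(1,2)$, so the natural candidate for $\kappa$ sits in bidegree $(1,1)$.

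The first step is to observe that $d\ff\in\big(\mfgs\otimes\OO^1(X)\big)^G$ and that the source map $\source\colon\gm\to X$ of the $G$-groupoid $\gm\toto X$ is $G$-equivariant, hence $\source^*d\ff\in\big(\mfgs\otimes\OO^1(\gm)\big)^G$. Applying the naturality identity \eqref{eq:BSG} and then Lemma~\ref{lambda} yields
\[ \BS(\source^*d\ff)=(\source\times 1)^*\BS(d\ff)=(\source\times 1)^*d\lambda=d\Lambda, \]
where $\lambda\in\OO^1(X\times G)$ is the $1$-form supplied by Lemma~\ref{lambda} and $\Lambda:=(\source\times 1)^*\lambda\in\OO^1(\gm\rtimes G)$. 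Explicitly, using the $G$-equivariance of $\ff$,
\[ \Lambda(v_x,L_{g*}\xi)=\ff(\Ad_g\xi)(\source(x))=\ff(\xi)(\source(x)\act g). \]

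The second step is to compute $\del\Lambda$ directly on a composable pair $\big((x_1,g_1),(x_2,g_2)\big)$ equipped with tangent vectors $\big((v_1,L_{g_1*}\xi_1),(v_2,L_{g_2*}\xi_2)\big)$, where $\source(x_1)\act g_1=\target(x_2)$. Using the standard identity $\ddtz{g_1(t)g_2(t)}=L_{g_1g_2*}(\Ad_{g_2\inv}\xi_1+\xi_2)$, the linearity of $\ff$ in its $\mfg$-argument, and the $G$-equivariance $\ff(\Ad_h\xi)(y)=\ff(\xi)(y\act h)$, one finds that the middle face pulls back to
\[ \epsilon_1^*\Lambda=\ff(\xi_1)(\source(x_2))+\ff(\xi_2)(\source(x_2)\act g_2). \]
The $\ff(\xi_2)$-contribution cancels with $\epsilon_0^*\Lambda$ and composability turns $\epsilon_2^*\Lambda$ into $\ff(\xi_1)(\target(x_2))$, leaving
\[ \del\Lambda=\ff(\xi_1)(\target(x_2))-\ff(\xi_1)(\source(x_2))=-(\del\ff)(\xi_1)(x_2). \]

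It remains to identify this with $\zeta$. Combining \eqref{15} with \eqref{9}, which reads $\widehat{\xi}\ip\theta=-(\delt\ff)(\xi)=-(\pp^*\del\ff)(\xi)$, and then descending along the $S^1$-bundle map $\pp$ (since the right-hand side is manifestly pulled back from $\gm$), we obtain $\zeta=-(\del\ff)(\xi_1)(x_2)=\del\Lambda$. Therefore
\[ D(-\Lambda)=d\Lambda-\del\Lambda=\BS(\source^*d\ff)-\zeta, \]
as required. The only real obstacle is the bookkeeping in the second step: one must carefully track how the semidirect-product multiplication acts on tangent vectors, and rely on the $G$-equivariance of $\ff$ to obtain the clean cancellation of the $\xi_2$-terms. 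Once the formula for $\Lambda$ is in hand, the identification with $\zeta$ is immediate from \eqref{9}.
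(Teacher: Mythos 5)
Your proof is correct and follows essentially the same route as the paper's: both take the primitive $\Lambda=(\source\times 1)^*\lambda$ (the paper's $\lambda'$), identify $d\Lambda$ with $\BS(\source^*d\ff)$ via \eqref{eq:BSG} and Lemma~\ref{lambda}, and identify $\del\Lambda$ with $\zeta$ by computing $-(\del\ff)(\xi_1)$ on composable pairs and descending through $\pg$ using \eqref{9} and \eqref{15}. The only blemish is the stray symbol $\kappa$ in your first paragraph, which should read $\Lambda$.
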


\begin{proof}
Since $\source: \gm\xto{}X$ is $G$-equivariant, from \eqref{eq:BSG} and \eqref{19}, we obtain 
\beq{23} \BS(\source^*d\ff)=(\source\times\id)^*\BS(d\ff)=(\source\times\id)^*d\lambda=d\lambdap ,\eeq
where $\lambdap=(\source\times\id)^*\lambda\in\OO^1(\gm\times G)$. 
More explicitly, $\forall(v_x,w_y)\in T_{(x,y)}\gm, \ \xi\in \mfg, \ g\in G$, we have
\[ \lambdap((v_x,w_y),L_{g*}\xi)=\lambda(w_y,L_{g*}\xi)=\ff(\Ad_g\xi)(y) .\]
The multiplication in the groupoid $\gm\rtimes G\toto X$ is defined by 
\beq{gmgx} \big((x,y),g\big)\cdot\big((x',y'),h\big) = \big((x,y'\act g\inv),gh\big) ,\eeq
provided $y\act g=x'$, where $(x,y),(x',y')\in\gm(\isomorphism X\times_M X)$ and $g,h\in G$.
It thus follows that
\begin{multline*} 
(\delr\lambdap)\big((v_x,w_y,L_{g*}\xi),(v'_{x'},w'_{y'},L_{h*}\eta)\big) \\ 
=\lambdap(v_x,w_y,L_{g*}\xi)-\lambdap\big((v_x,w_y,L_{g*}\xi)\cdot
(v'_{x'},w'_{y'},L_{h*}\eta)\big)+\lambdap(v'_{x'},w'_{y'},L_{h*}\eta)
.\end{multline*}
But \eqref{gmgx} implies that
\[ (v_x,w_y,L_{g*}\xi)\cdot (v'_{x'},w'_{y'},L_{h*}\eta)=(v''_x, w''_{y'\act g\inv},
L_{gh*}(\Ad_{h\inv}\xi+\eta)) ,\]
where $v''_x$ and $w''_{y'\act g\inv}$ are tangent vectors of $X$
at $x$ and ${y'\act g\inv}$ respectively.
Hence we get
\begin{align*} 
& \lambdap\big(  (v_x,w_y,L_{g*}\xi)\cdot (v'_{x'},w'_{y'},L_{h*}\eta) \big) && \\ 
=\;& \ff(\Ad_{gh}(\Ad_{h\inv}\xi+\eta))(y'\act g\inv) && \\ 
=\;& \ff(\Ad_{g\inv}\Ad_{gh}(\Ad_{h\inv}\xi+\eta))(y') 
&& \text{since $\ff$ is $G$-equivariant} \\ 
=\;& \ff(\xi)\;(y')+\ff(\Ad_h\eta)\;(y') && \text{since $\ff$ is linear in $\mfg$} .
\end{align*}
Thus, we have
\begin{align*}
& (\delr\lambdap)\big((v_x,w_y,L_{g*}\xi), (v'_{x'},w'_{y'},L_{h*}\eta)\big) \\
=\;& \ff(\Ad_g\xi)(y)-\big(\ff(\xi)\;(y')+\ff(\Ad_h\eta)\;(y') \big)+\ff(\Ad_h\eta)(y') \\
=\;& \ff(\Ad_g\xi)(y)-\ff(\xi)(y') \\
=\;& \ff(\xi)\;(y\act g)-\ff(\xi)\;(y') \\
=\;& \ff(\xi)\;(x')-\ff(\xi)\;(y') \\
=\;& -(\del\ff)(\xi)\;(x',y') 
.\end{align*}
Now, set $z=(x,y)$ and $z'=(x',y')\in\gm=X\times_M X$ and choose
$\ztilde$ and $\ztilde'\in\gmt$ such that $\pp(\ztilde)=z$ and 
$\pp(\ztilde')=z'$. Moreover, take $v_\ztilde\in T_\ztilde \gmt$ and
$v_{\ztilde'}\in T_{\ztilde'}\gmt$ such that
$p_{G*}v_\ztilde=(v_x,w_y)$ and
$p_{G*}v_{\ztilde'}=(v'_{x'},w'_{y'})$.
Then 
\begin{align*} 
& p_{G}^*(\delr\lambdap)\big((v_\ztilde, L_{g*}\xi),(v_{\ztilde'}, ,L_{h*}\eta) \big) && \\
=\;& (\delr\lambdap)\big((v_x,w_y,L_{g*}\xi),(v'_{x'},w'_{y'},L_{h*}\eta)\big) && \\ 
=\;& -(\del\ff)(\xi)\;(x',y') && \\
=\;& -p^*_{G}((\del\ff)(\xi))\;(\ztilde') && \\ 
=\;& -(\delt\ff)(\xi)\;(\ztilde') && \\ 
=\;& \widehat{\xi}_{\ztilde'}\ip\theta && \text{by \eqref{9}} \\ 
=\;& (p_{G}^*\zeta)\big((v_\ztilde, L_{g*}\xi),(v_{\ztilde'},L_{h*}\eta)\big) 
&& \text{by \eqref{15}} 
.\end{align*}
Hence \beq{25} \delr\lambdap=\zeta .\eeq
From \eqref{23} and \eqref{25}, it follows that 
\[ \BS(\source^*d\ff)-\zeta=d\lambdap-\delr\lambdap .\] 
Therefore we have
$\eqcls{\BS(\source^*d\ff)-\zeta}=0$ in $H^3_{\DR}\big((\gm\rtimes G)\simplicial\big)$. 
\end{proof}

Let $\qq\in\big(\mfgs\otimes\OO^1(X)\big)^G$ be defined by 
$\qq(\xi)=\widehat{\xi}\ip\bb$, $\forall\xi\in\mfg$.

\begin{lem}\label{lem:4.4}
$\delr\bb-\pr^*\omega= (\sourcer)^*\bb-(\source\rond\pr)^*\bb=-\BS(\source^*\qq)$
\end{lem}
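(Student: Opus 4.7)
The plan is to verify the two equalities of the chain separately, using only the definitions of $\delr$ and $\pr$, the decomposition \eqref{888} of $\omega$, and Lemma~\ref{lem:4.3} combined with the naturality property \eqref{eq:BSG}.

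For the first equality, I would start from $\delr\bb = \sourcer^*\bb - \targetr^*\bb$ (the coboundary of the object-level form $\bb$ in the transformation groupoid $\gm\rtimes G \toto X$), and from $\omega = \del\bb = \source^*\bb - \target^*\bb$ on $\gm$ (which is \eqref{888}). The key observation is the obvious equality of smooth maps $\targetr = \target\rond\pr:\gm\rtimes G \to X$, so that
\[ \pr^*\omega \;=\; (\source\rond\pr)^*\bb - \targetr^*\bb. \]
Subtracting this from $\delr\bb$ cancels the $\targetr^*\bb$ terms and yields $\delr\bb - \pr^*\omega = \sourcer^*\bb - (\source\rond\pr)^*\bb$.

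For the second equality, I would apply Lemma~\ref{lem:4.3} with $N=X$ and $B=\bb$ to obtain $\BS(\qq) = -\dels\bb$ on the transformation groupoid $X\rtimes G\toto X$. Since $\source:\gm\to X$ is $G$-equivariant, the naturality formula \eqref{eq:BSG} gives
\[ \BS(\source^*\qq) \;=\; (\source\times 1)^*\BS(\qq) \;=\; -(\source\times 1)^*\dels\bb. \]
A direct check of the underlying smooth maps $\gm\rtimes G \to X$ shows that $\source_X\rond(\source\times 1) = \sourcer$ and $\target_X\rond(\source\times 1) = \source\rond\pr$, where $\source_X,\target_X$ denote the structure maps of the transformation groupoid $X\rtimes G$. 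Therefore $(\source\times 1)^*\dels\bb = \sourcer^*\bb - (\source\rond\pr)^*\bb$, and the desired identity $-\BS(\source^*\qq) = \sourcer^*\bb - (\source\rond\pr)^*\bb$ follows.

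I don't anticipate a serious obstacle here: the argument is essentially bookkeeping among the source and target maps of the various transformation groupoids. The only mild subtlety is that $\source\times 1$ is being used as a smooth map of underlying manifolds rather than a groupoid morphism, but this is precisely the generality in which \eqref{eq:BSG} is formulated.
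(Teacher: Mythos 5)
Your proposal is correct and follows essentially the same route as the paper: the first equality via $\targetr=\target\rond\pr$ and the decomposition $\omega=\del\bb$, and the second via Lemma~\ref{lem:4.3} together with \eqref{eq:BSG} and the identities $\source_0\rond(\source\times 1)=\sourcer$, $\target_0\rond(\source\times 1)=\source\rond\pr$. No gaps.
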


\begin{proof}
The source and target maps of the groupoid 
$\gm\rtimes G\toto X$ are given, respectively, by
$\targetr(\gam,g)=\target(\gam)$ and $\sourcer(\gam,g)=\source(\gam)\act g$.
Using \eqref{888}, we obtain 
\begin{align*}
\delr\bb-\pr^*\omega 
=& (\sourcer)^*\bb-(\targetr)^*\bb-\pr^*(\del\bb) \\ 
=& (\sourcer)^*\bb-(\targetr)^*\bb-\pr^*(\source^*\bb-\target^*\bb) \\ 
=& (\sourcer)^*\bb-(\source\rond\pr)^*\bb 
.\end{align*}
This proves the first equality.

For the second equality, note that, by \eqref{eq:BSG} and Lemma~\ref{lem:4.3}, we have 
\[ \BS(\source^*\qq) = (\source\times 1)^*\BS(\qq) = (\source\times 1)^*(\target_0^* B-\source_0^*B) .\]
Here $\target_0$ and $\source_0$ are the source and target maps
of the transformation groupoid $X\rtimes G\toto X$.
It is clear that $\source_0\rond(\source\times 1)=\sourcer$ and 
$\target_0\rond(\source\times 1)=\source\rond\pr$. 
Thus the second equality follows.
\end{proof}

\begin{proof}[Proof of Theorem~\ref{26}] 
Using \eqref{11}, we get 
\beq{A1}
\mu(\etag)=\pih^*\BS(\alpha+\eta)
=\pi^*\alpha+\pih^*\BS(\eta)
=d\bb+\pih^*\BS(\eta) 
.\eeq
And, since $\pi\rond \source :\gm\to M$ is a $G$-equivariant map, we get
\begin{align}
\pih^*\BS(\eta)  
=& ((\pi\rond\source)\times\id)^*\BS(\eta) && \nonumber \\ 
=& \BS\big((\pi\rond\source)^*\eta\big) && \text{by \eqref{eq:BSG}} \nonumber \\ 
=& \BS(\source^*\pi^*\eta) && \nonumber \\ 
=& \BS(\source^*(d\ff-\qq)) && \text{by \eqref{12}} \nonumber \\ 
=& \BS(\source^*d\ff)-\BS(\source^*\qq) . && \label{A2}
\end{align} 
Therefore,
\begin{align*}
\mu(\etag) =& d\bb+\pih^*\BS(\eta) && \text{by \eqref{A1}} \\ 
=& d\bb+\BS(\source^*d\ff)-\BS(\source^*\qq) && \text{by \eqref{A2}} \\
%=& d\bb+\BS(\source^*d\ff)+ (\sourcer)^*\bb - (\source\rond\pr)^*\bb 
%&& \text{by \eqref{A3} and \eqref{A4}} \\
=& d\bb+\BS(\source^*d\ff)+\delr\bb-\pr^*\omega && \text{by Lemma~\ref{lem:4.4}}
.\end{align*} 
Hence, by Lemma~\ref{22}, 
\[ \mu \eqcls{\etag}
=\eqcls{d\bb+\delr\bb}+\eqcls{\BS(\source^*d\ff)-\pr^*\omega}
=\eqcls{\zeta-\pr^*\omega} .\qedhere\]
\end{proof}

%%%%%%%%%%%%%%%%%%%%%%%%%%%%%%%%%%%%%%%%%%%%%%%%%%%%%%%%%%%%%%%%%%
% BIBLIOGRAPHY INSTRUCTIONS
%%%%%%%%%%%%%%%%%%%%%%%%%%%%%%%%%%%%%%%%%%%%%%%%%%%%%%%%%%%%%%%%%%

%\nocite{*}
%% SELECT ONE STYLE
\bibliographystyle{hamsplain}
\bibliography{EGbiblio}
% DO NOT UPLOAD .BIB FILE TO THE ARXIV, BUT DO UPLOAD .BBL FILE INSTEAD

\end{document}